\theoremstyle{plain}
\newtheorem{theorem}{Theorem}
\newtheorem{proposition}{Proposition}
\newtheorem{corollary}{Corollary}
\theoremstyle{definition}
\newtheorem{example}{Example}
\renewcommand{\thesubfigure}{\thefigure \alph{subfigure}} \makeatletter \renewcommand{\p@subfigure}{} \renewcommand{\@thesubfigure}{\bf Figure \thesubfigure:\hskip\subfiglabelskip} \makeatother
\newcommand{\np}[1]{\textbf{\textit{#1}}}
\newcommand{\Cc}[0]{\mathcal{C}}
\newcommand{\Aut}[1]{\textnormal{Aut}\left(#1\right)}
\newcommand{\hol}[2]{\mathcal{O}\left(#1,#2\right)}
\newcommand{\ana}[1]{\mathcal{A}\left(#1\right)}
\newcommand{\holC}[1]{\mathcal{O}\left(#1\right)}
\newcommand{\dk}[3]{d_{#1}^K(#2,#3)}
\newcommand{\op}[1]{\textnormal{#1}}
\newcommand{\dsp}[0]{ds_{\textnormal{supp}}^2}
\newcommand{\dspp}[0]{ds_{\textrm{supp}}'^2}
\newcommand{\lap}[0]{\lambda_\textnormal{supp}}
\newcommand{\lapp}[0]{\lambda_\textnormal{supp}'}
\newcommand{\pr}[0]{\textnormal{pr}}
\newcommand{\Dr}[1]{\DD_{#1}}
\newcommand{\De}[0]{\mathbb{D}}
\newcommand{\pC}[0]{\CC^\ast}
\newcommand{\ppC}[0]{\CC^{\ast\ast}}
\newcommand{\cp}[1]{\CC\PP^{#1}}
\newcommand{\CC}[0]{\mathbb{C}}
\newcommand{\DD}[0]{\mathbb{D}}
\newcommand{\HH}{\mathbb{H}}
\newcommand{\RR}{\mathbb{R}}
\newcommand{\NN}{\mathbb{N}}
\newcommand{\PP}{\mathbb{P}}
\newcommand{\dif}[1]{\textnormal{d}#1}
\newcommand{\ie}[0]{\textnormal{i}}
\DeclareMathOperator\artanh{artanh}
\setlist[enumerate]{label=\textnormal{(\alph*)},leftmargin=*,topsep=12pt,partopsep=12pt,itemsep=1mm}
\title{The Ahlfors lemma and Picard's theorems}
\author{Aleksander Simoni\v{c}}
\address{Fakulteta za matematiko in fiziko \\ Univerza v Ljubljani \\ Jadranska ulica 19 \\ 1000 Ljubljana}
\email{aleksander.simonic@student.fmf.uni-lj.si}
\subjclass[2010]{30C80,30F45}
\begin{document}

\begin{abstract}
The article introduces Ahlfors' generalization of the Schwarz lemma. With this powerful geometric tool of complex functions in one variable, we are able to prove some theorems concerning the size of images under holomorphic mappings, including the celebrated Picard's theorems. The article concludes with a brief insight into the theory of Kobayashi hyperbolic complex manifolds.
\end{abstract}

\maketitle

\section{Introduction}

Since 1881, when famous French mathematician \textbf{Henry Poincar\'{e}} (1854--1912) connected hyperbolic geometry on a disc with complex analysis, it had been known that the \emph{Poincar\'{e} metric} has constant Gauss curvature $-1$. This is also why the metric is traditionally considered \emph{hyperbolic}. Surprisingly, it was not until 1938 that a Finnish mathematician \textbf{Lars V.~Ahlfors} (1907--1996), one of the first two Fields medalists, realized that the \emph{Schwarz-Pick lemma} (Theorem \ref{izr:sch.pick}) was a consequence of the negative curvature of the Poincar\'{e} metric. His result is known as the \emph{Ahlfors lemma} (Theorem \ref{izr:ahlfors}) or the \emph{Schwarz-Pick-Ahlfors lemma} in full. According to Ahlfors himself, he published the paper \cite{Ahl} because the lemma provides a relatively simple proof of the \emph{Bloch theorem} (Theorem \ref{thm:bloch}) with a very good estimation of the constant $B$ from the Bloch theorem.

It seems rather interesting that although the Ahlfors lemma is comparatively old and  straightforward, it is rarely presented in general textbooks on complex analysis of one variable, Ahlfors' classic itself \cite{AhlCplx} being no exception. On the other hand, it is included in a book \cite{Nar01} by Narasimhan, who proves and uses it further to prove Picard's theorems. That said, he does not prove the Bloch theorem, one of Ahlfors' own applications, and only briefly addresses the importance of completeness of metrics.

There are two reasons for writing this article. The first reason is to provide a proof of the original version of Ahlfors' lemma and then use it to prove Bloch's and Picard's theorems. This is presented in the way that is useful for the second reason, which is an introduction to the theory of hyperbolic complex manifolds. The article can thus serve as a motivation for that interesting and still developing area of complex geometry.

Let $\CC$ be a complex plane, $\Dr{r}:=\{z\in\CC\colon|z|<r\}$ an open disc with radius $r>0$ and $\Dr{r}^\ast:=\Dr{r}\setminus\{0\}$. Also $\DD:=\Dr{1}$ and $\DD^\ast:=\Dr{1}^\ast$. \emph{A domain} $\Omega\subseteq\CC$ is, by definition, an open and connected set. The family of holomorphic functions on the domain $\Omega$ is denoted by $\holC{\Omega}$, and the family of holomorphic mappings from a domain $\Omega_1$ to domain $\Omega_2$ by $\hol{\Omega_1}{\Omega_2}$.

\begin{theorem}[Schwarz's lemma]
\label{schlema}
Assume $f\in \hol{\DD}{\DD}$ and $f(0) = 0$.
\begin{enumerate}
\item Then $|f(z)| \leq |z|$ for every $z\in\DD$ and $|f'(0)| \leq 1$.
\item If $|f'(0)| = 1$, or if $|f(z_0)| = |z_0|$ for some $z_0\in\DD^\ast$, then there exists $a\in \partial\DD$ such that $f(z) = az$.
\end{enumerate}
\end{theorem}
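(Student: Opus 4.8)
The plan is to prove Schwarz's lemma via the maximum modulus principle applied to an auxiliary holomorphic function obtained by dividing out the zero at the origin.

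**Proof plan.**

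First I would use the hypothesis $f(0)=0$ to factor the zero: since $f$ is holomorphic on $\DD$ and vanishes at the origin, the function
\[
g(z) := \begin{cases} f(z)/z & z\in\DD^\ast,\\ f'(0) & z=0,\end{cases}
\]
extends holomorphically across $0$ (one checks the removable singularity by expanding $f$ in its power series $f(z)=\sum_{n\ge 1}a_n z^n$, so that $g(z)=\sum_{n\ge 0}a_{n+1}z^n$ and $g(0)=a_1=f'(0)$). This $g$ is the central object.

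Next I would estimate $|g|$ on circles $|z|=\rho$ with $\rho<1$. Since $|f|<1$ on $\DD$, on the circle $|z|=\rho$ we have $|g(z)|=|f(z)|/\rho\le 1/\rho$. By the maximum modulus principle applied to $g$ on the closed disc $\overline{\Dr{\rho}}$, the bound $|g(z)|\le 1/\rho$ holds for all $z$ with $|z|\le\rho$. Letting $\rho\to 1^-$ while keeping $z$ fixed yields $|g(z)|\le 1$ for every $z\in\DD$. Unwinding the definition of $g$ gives both $|f(z)|\le|z|$ for all $z\in\DD$ and, taking $z=0$, the derivative bound $|f'(0)|=|g(0)|\le 1$. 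This settles part~(a).

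For part~(b), the equality cases are exactly where the strict inequality in the maximum principle becomes sharp. If either $|f'(0)|=1$ or $|f(z_0)|=|z_0|$ for some $z_0\in\DD^\ast$, then $|g|$ attains the value $1$ — its maximum — at an interior point of $\DD$. The maximum modulus principle then forces $g$ to be a constant of modulus $1$, say $g\equiv a$ with $a\in\partial\DD$, whence $f(z)=az$. The main obstacle, and the only genuinely subtle point, is justifying the passage $\rho\to 1^-$ in the estimate: one must fix $z$ first and apply the bound only for $\rho>|z|$, rather than trying to take a maximum over all of $\DD$ at once (where $|f|<1$ is not attained). Everything else is a routine application of the removable singularity theorem and the maximum modulus principle.
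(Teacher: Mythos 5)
Your proof is correct and is exactly the argument the paper itself indicates: Carath\'{e}odory's method of applying the maximum modulus principle to the auxiliary function $f(z)/z$, which is holomorphic on $\DD$ because $f(0)=0$. The handling of the limit $\rho\to1^-$ (fixing $z$ and using only $\rho>|z|$) and the equality case via an interior maximum are both done properly, so nothing is missing.
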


In the years 1869--1870, the German mathematician {\textbf{Hermann A.~Schwarz}} (1843--1921) was trying to offer an ultimate proof of the celebrated Riemann mapping theorem. In his thesis (1851), Riemann proposed a theorem which stated that every simply connected domain $\Omega\subset\CC$ is biholomorphic to $\DD$. R\'{e}sum\'{e} of the second chapter of Schwarz's lecture entitled ``Zur Theorie der Abbildung'' \cite[pp.~109-111]{Schw} is: \emph{Let $f\colon\DD\to f(\DD)\subset\CC$ be a biholomorphic mapping with $f(0)=0$. Assume that $\rho_1$ (resp.~$\rho_2$) is the minimum (resp.~the maximum) distance from $0$ to the boundary of $f(\DD)$. Then $\rho_1|z|\leq|f(z)|\leq \rho_2|z|$ and $\rho_1\leq|f'(0)|\leq \rho_2$}. Schwarz derived the first inequality from examining real part of $\log(f(z)/z)$ and the second inequality from the Cauchy integral representation theorem for $f'(0)$. It is clear that part (a) of Theorem \ref{schlema} follows from this result. The present form, proof and name of the lemma were written in 1912 by a Greek-German mathematician {\textbf{Constantin Carath\'{e}odory}} (1873--1950). He popularized the lemma through various problems of conformal mappings. For proof of the lemma, Carath\'{e}odory used the maximum principle on an auxiliary function $f(z)/z$. Since $f(0)=0$, this function is holomorphic on $\DD$.

The Schwarz lemma could be very easily reformulated for discs of arbitrary radii. Supposing that $f\in\hol{\Dr{r_1}}{\Dr{r_2}}$, where $r_1,r_2>0$ and $f(0)=0$. Then the mapping $F(z):=r_2^{-1}f(r_1z)$ meets the conditions of the lemma, so we get $|F(z)|\leq|z|$ for $z\in\DD$. Then $|f(z)|\leq (r_2/r_1)|z|$ for $z\in\Dr{r_1}$. Let $f$ be the entire function, i.e. holomorphic on $\CC$ such that $f(\CC)\subset\Dr{r_2}$ for a fixed $r_2>0$. The radius $r_1$ can be arbitrary large, so we get $f(z)\equiv0$. This is the content of the following well-known theorem.

\begin{theorem}[Liouville]
Every bounded entire function is constant.
\end{theorem}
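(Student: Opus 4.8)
The plan is to deduce Liouville's theorem directly from the rescaled version of Schwarz's lemma derived in the paragraph preceding the statement, so that essentially no new work is required beyond a normalization that places a fixed point at the origin. The rescaled lemma tells us that if $h\in\hol{\Dr{r_1}}{\Dr{r_2}}$ with $h(0)=0$, then $|h(z)|\leq (r_2/r_1)|z|$ for every $z\in\Dr{r_1}$, and the point is that $r_1$ is at our disposal.

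First I would reduce to the normalized situation. Given a bounded entire function $f$, say $|f(z)|\leq M$ on all of $\CC$, I would set $g(z):=f(z)-f(0)$. Then $g$ is again entire, satisfies $g(0)=0$, and obeys $|g(z)|\leq|f(z)|+|f(0)|\leq 2M$, so that $g(\CC)\subset\Dr{r_2}$ for any fixed choice of $r_2>2M$. In particular, for every $r_1>0$ the restriction of $g$ to $\Dr{r_1}$ yields an element of $\hol{\Dr{r_1}}{\Dr{r_2}}$ that vanishes at the origin, so the hypotheses of the rescaled lemma are met.

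Next I would apply the estimate and let the radius grow. The rescaled lemma gives $|g(z)|\leq (r_2/r_1)|z|$ for every $z\in\Dr{r_1}$. Now I would fix an arbitrary point $z\in\CC$ and keep $r_2$ fixed; as soon as $r_1>|z|$ the bound applies at $z$, and since $r_1$ may be taken arbitrarily large, letting $r_1\to\infty$ drives the right-hand side to $0$ and forces $g(z)=0$. As $z$ was arbitrary, $g\equiv 0$, whence $f\equiv f(0)$ is constant.

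I do not expect a genuine obstacle: the whole content is already contained in the scaling argument recorded just before the statement. The only point deserving a moment's care is the passage to $g=f-f(0)$, which is forced because Schwarz's lemma requires the value $0$ to be fixed; here one merely checks that subtracting the constant $f(0)$ preserves both entireness and boundedness, which is immediate.
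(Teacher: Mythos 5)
Your proof is correct and takes essentially the same route as the paper: the rescaled Schwarz lemma $|g(z)|\leq (r_2/r_1)|z|$ with $r_1\to\infty$. In fact, your explicit normalization $g:=f-f(0)$ tidies up a point the paper glosses over, since the paper's sketch concludes $f\equiv 0$ directly, which tacitly assumes $f(0)=0$ and yields the full statement of Liouville only after exactly the reduction you perform.
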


The connection between the Schwarz lemma and the Liouville theorem is a wonderful and simple example of the \emph{Bloch principle: nihil est in infinito quod non prius fuerit in finito}, which can be translated as \emph{there is nothing in the infinite which was not first in the finite}. In consequence, for a global result like Liouville's, there must be a more powerful local result, such as Schwarz's. A French mathematician \textbf{Andr\'{e} Bloch} (1893--1948) published his principle in the paper in 1926. Let $\Dr{r}(z_0):=\{z\in\CC\colon|z-z_0|<r\}$ be an open disc with radius $r>0$ and centre $z_0\in\CC$. We denote by $\ana{\Omega}$ the set of all continuous functions on $\overline\Omega$ which are holomorphic on $\Omega$. In 1924, Bloch proved

\begin{theorem}
\label{thm:bloch}
There is a universal constant $B>0$ with the property that for every value of $0<R<B$, every function $f\in\ana{\DD}$ with $|f'(0)|=1$ maps a domain $\Omega\subset\DD$ biholomorphically onto $\Dr{R}(z_0)$ for some $z_0\in f(\DD)$.
\end{theorem}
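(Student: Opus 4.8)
The plan is to derive the theorem from the Ahlfors lemma (Theorem \ref{izr:ahlfors}), the tool the introduction advertises for exactly this purpose. First I would normalize: since $|f'(0)|=1$, replacing $f$ by $\overline{f'(0)}\,f$ rotates the image without altering the radius of any disc inside it, so we may assume $f'(0)=1$. Next I would recast the conclusion quantitatively. For $f\in\ana{\DD}$ let its \emph{Bloch radius} $b(f)$ be the supremum of the radii $R$ for which some subdomain $\Omega\subset\DD$ is mapped biholomorphically by $f$ onto a disc $\Dr{R}(z_0)$. If $f$ maps $\Omega$ biholomorphically onto $\Dr{R}(z_0)$, then the $f$-preimage in $\Omega$ of any concentric $\Dr{R'}(z_0)$ with $R'<R$ is again such a subdomain, and its centre $z_0$ lies in $f(\DD)$; hence the attainable radii form an interval and it suffices to prove a universal lower bound $b(f)\ge B>0$. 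The Ahlfors method in fact yields the explicit value $B=\sqrt3/4$.

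The engine is the special case of the Ahlfors lemma stating that a conformal metric $\lambda(z)\,|\dif{z}|$ on $\DD$ with Gauss curvature $\le -1$ is dominated by the Poincar\'{e} metric $\frac{2}{1-|z|^2}\,|\dif{z}|$, so in particular $\lambda(0)\le 2$. The strategy is to build from $f$ a metric on $\DD$ with curvature $\le -1$ whose density at the origin \emph{increases} as the Bloch radius $b:=b(f)$ \emph{decreases}; the ceiling $\lambda(0)\le 2$ then forbids $b$ from being too small. Concretely I would take a model density $\sigma(w)\,|\dif{w}|$ on the image plane and pull it back by $f$, setting $\lambda=(\sigma\circ f)\,|f'|$. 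Since $f$ is holomorphic, this pullback has the same curvature as $\sigma$ wherever $f'\ne 0$; at a zero of $f'$ the factor $|f'|$ makes $\lambda$ vanish and contributes a Dirac curvature mass of the favourable sign, so the degeneracy only helps the inequality (and is exactly why one works with \emph{ultrahyperbolic} supporting metrics). The density $\sigma$ must be engineered so that its curvature is $\le -1$ and it blows up along the only intrinsic obstruction to enlarging a schlicht disc, the critical values of $f$---the competing obstruction, that the inverse branch reaches $\partial\DD$, being already accounted for by the Poincar\'{e} comparison. Evaluating $\lambda(0)=\sigma(f(0))\,|f'(0)|=\sigma(f(0))$ and applying $\lambda(0)\le 2$ then produces an inequality in $b$, which optimization over the free parameters of $\sigma$ sharpens to $b\ge\sqrt3/4$.

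The hard part will be the construction of $\sigma$ and the verification of its curvature bound. One must encode the geometry of the largest schlicht disc into an explicit profile---this is where $b$ and the critical values enter---and then confirm through the formula $K=-\lambda^{-2}\Delta\log\lambda$ that $K\le -1$ persists after the pullback, with a careful treatment of the cone-type singularities that $\sigma$ develops at the critical values so that the Ahlfors lemma remains applicable there. Securing merely \emph{some} universal $B>0$ is the robust part of the argument, since any admissible choice of $\sigma$ suffices for it; extracting the sharp constant $\sqrt3/4$ is the delicate optimization, and it is the payoff that, as the introduction notes, makes the Ahlfors lemma so attractive here.
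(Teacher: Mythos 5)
Your skeleton coincides with the paper's: reduce to a lower bound on the maximal schlicht radius $\op{B}(f)$, build from $f$ a conformal pseudometric on $\DD$ of curvature at most $-1$ that encodes $\op{B}(f)$, evaluate the Ahlfors bound at the origin where $|f'(0)|=1$, and tune a free parameter to reach $\sqrt{3}/4$. But the proposal stops exactly where the proof has to start: the density $\sigma$ is never written down, and you yourself defer its construction and curvature bound as ``the hard part.'' That construction is the entire content of the argument. The paper takes
\[
\sigma(w)=\frac{A^2}{2\rho(w)\left(A^2-\rho(w)\right)^2},
\]
where $\rho(w)$ is the radius of the largest schlicht disc centred at $w$ and $A$ is a parameter with $A^2>3\op{B}(f)$; its pullback by $f$ is the metric \eqref{ena:bloch1}. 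Everything quantitative flows from this specific profile: the Ahlfors lemma at $0$ gives $A^2\leq4\rho(f(0))\left(A^2-\rho(f(0))\right)^2\leq4\op{B}(f)\left(A^2-\op{B}(f)\right)^2$, and letting $A^2$ decrease to $3\op{B}(f)$ yields $\op{B}(f)\geq\sqrt{3}/4$. Without an explicit $\sigma$ there is nothing to which the Ahlfors lemma can be applied, so no universal constant --- not even a crude one --- is actually produced; the ``robust part'' you invoke does not exist independently of the formula.

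Two further points in your sketch would need repair even after the right $\sigma$ is chosen. First, $\rho(w)$ is merely continuous, so the curvature of $\sigma$ is not defined pointwise; this is precisely why the paper states the Ahlfors lemma with supporting pseudometrics. The supporting metric at a noncritical point $z_0$ is obtained by replacing $\rho(f(z))$ with $|f(z)-f(s_0)|$, where $f(s_0)$ lies on the boundary circle of the maximal schlicht disc about $f(z_0)$; it has constant curvature $-1$, and the supporting inequality $\lap\leq\lambda$ holds only where $x\mapsto x(A^2-x)^2$ is increasing, which forces $A^2>3\op{B}(f)$. Thus the curvature verification and the ``delicate optimization'' are one and the same step, not separable stages as your plan suggests. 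Second, your guiding principle that $\sigma$ should blow up at the critical values, the boundary obstruction being ``already accounted for by the Poincar\'{e} comparison,'' misidentifies the obstruction: the boundary of a maximal schlicht disc need not contain a critical value at all; it may instead contain a boundary value $f(s_0)$ with $s_0\in\partial\DD$, which is exactly why the hypothesis $f\in\ana{\DD}$ (continuity up to $\overline{\DD}$) rather than $f\in\holC{\DD}$ is used. The paper's supporting construction treats both cases uniformly through the single point $s_0\in\overline{\DD}$; a density singular only along critical values would leave the second, equally common, case uncontrolled.
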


We have named the discs from the theorem \emph{simple (``schlicht'') discs}. The Bloch theorem is interesting because it guarantees the existence of simple discs with a fixed radius in the image of ``quite a large family'' of holomorphic functions on a disc. In accordance with his principle, Bloch derived the following celebrated global result from his ``local'' theorem.

\begin{theorem}[Little Picard theorem]
\label{izr:mali.pic}
Any entire function whose range omits at least two distinct values is a constant.
\end{theorem}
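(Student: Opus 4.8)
The plan is to deduce the theorem from the Bloch theorem (Theorem \ref{thm:bloch}), following Bloch's own route: an entire function omitting two values is recast as an entire function whose omitted values form a relatively dense net in $\CC$, and the existence of such a function contradicts Theorem \ref{thm:bloch} unless it is constant. First I would normalise. If $f$ is entire and omits two distinct values $a\neq b$, then replacing $f$ by $(f-a)/(b-a)$ I may assume $f\in\holC{\CC}$ omits $0$ and $1$. As $f$ is then zero-free and $\CC$ is simply connected, $f$ has a holomorphic logarithm; put $g:=\frac{1}{2\pi\ie}\log f$, so $f=\exp(2\pi\ie g)$ and, since $f\neq1$, the entire function $g$ omits every integer. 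Consequently $g$ and $g-1$ are zero-free, so $(2g-1)^{2}-1=4g(g-1)$ is zero-free and admits a holomorphic square root $2\sqrt{g(g-1)}$; moreover $(2g-1)+2\sqrt{g(g-1)}$ is zero-free, its reciprocal being $(2g-1)-2\sqrt{g(g-1)}$. Hence
\[
F:=\tfrac12\log\!\left((2g-1)+2\sqrt{g(g-1)}\right)
\]
defines an entire function with $\cosh(2F)=2g-1$, and therefore $f=-\exp\!\left(\ie\pi\cosh(2F)\right)$.

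Next I would determine which values $F$ omits. Because $-\exp(\ie\pi w)=1$ precisely when $w\in2\mathbb{Z}+1$, the identity $f=-\exp(\ie\pi\cosh(2F))$ together with $f\neq1$ forces $F$ to omit
\[
\Omega:=\{\omega\in\CC\colon\cosh(2\omega)\in2\mathbb{Z}+1\}.
\]
A short computation identifies $\Omega$ with the grid $\pm\tfrac12\operatorname{arccosh}(2n-1)+\tfrac{\pi}{2}\ie\,m$, $n\geq1$, $m\in\mathbb{Z}$: the admissible imaginary parts are spaced by $\pi/2$, while the admissible real parts accumulate and tend to $\pm\infty$. The lemma I must prove is that $\Omega$ is relatively dense, i.e.\ there is $\rho>0$ such that $\CC\setminus\Omega$ contains no disc of radius exceeding $\rho$.

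Finally I would apply Theorem \ref{thm:bloch}. Assuming $F$ non-constant, choose $z_{0}$ with $F'(z_{0})\neq0$; for $r>0$ the normalised map $h_{r}(\zeta):=\bigl(F(z_{0}+r\zeta)-F(z_{0})\bigr)/\bigl(rF'(z_{0})\bigr)$ belongs to $\ana{\DD}$ and satisfies $|h_{r}'(0)|=1$, so by Theorem \ref{thm:bloch} its image contains a schlicht disc of radius $B/2$. Transporting this disc back by the affine map $w\mapsto F(z_{0})+rF'(z_{0})w$ shows that $F(\Dr{r}(z_{0}))$ contains a disc of radius $\tfrac12 r\,|F'(z_{0})|\,B$, which exceeds $\rho$ for large $r$. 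Then $F(\CC)$ would contain a disc of radius larger than $\rho$, contradicting that $F$ omits the relatively dense set $\Omega$. Hence $F$, and with it $f=-\exp(\ie\pi\cosh(2F))$, is constant.

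The main obstacle is the passage from ``omits two values'' to ``omits a relatively dense net'': it requires the successive logarithm and square-root extractions above, each legitimate only because $\CC$ is simply connected, and above all the quantitative fact that the holes of $\CC\setminus\Omega$ are of bounded size --- this relative density is exactly what lets Theorem \ref{thm:bloch} bite. A secondary, routine point is that Theorem \ref{thm:bloch} is stated only under the normalisation $|f'(0)|=1$, so I must first record its scaled form through the affine change of variables used above. (Thematically closer to the rest of the article would be to avoid Bloch altogether: endow $\CC\setminus\{0,1\}$ with a complete conformal metric of curvature bounded above by a negative constant and combine the Ahlfors lemma with Liouville's theorem; I note this only as the geometric analogue of the argument sketched here.)
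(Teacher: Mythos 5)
Your proposal is correct, but it follows a genuinely different route from the paper. The article proves the Little Picard theorem in Section \ref{sec:little.picard} purely geometrically: it constructs the explicit Hermitian metric \eqref{ena:met.ppC} on $\ppC$, verifies that it is complete and has Gauss curvature $<-1$, and applies the Ahlfors lemma (Theorem \ref{izr:ahlfors}) to $f\in\hol{\Dr{r}}{\ppC}$ to obtain the derivative bound \eqref{ena:pos.ahl}; letting $r\to\infty$ then forces $f'\equiv0$. Your argument is instead Bloch's original derivation, which the introduction alludes to but the paper never carries out: the tower $f=\exp(2\pi\ie g)$, $\cosh(2F)=2g-1$ converts the two omitted values into an omitted relatively dense grid $\Omega$, and the rescaled Bloch theorem (Theorem \ref{thm:bloch}) shows that a nonconstant entire $F$ would have arbitrarily large schlicht discs in its image, which $\Omega$ forbids. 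There is no circularity here, since the paper proves Theorem \ref{thm:bloch} from Ahlfors' lemma before and independently of Picard. The two lemmas you flag (relative density of $\Omega$, the scaled form of Bloch) are genuinely needed but routine; note that the $\pi/2$ spacing of the imaginary parts of $\Omega$ relies on $2\mathbb{Z}+1$ containing the negative odd integers as well, which your identity $\cosh(2\omega)=(2n-1)(-1)^m$ for $\omega=\pm\tfrac12\operatorname{arccosh}(2n-1)+\tfrac{\pi}{2}\ie m$ does account for. As for what each approach buys: yours is more elementary once Bloch is granted --- no curvature computation for \eqref{ena:met.ppC} is required --- and it is a textbook instance of the Bloch principle quoted in the introduction; but it leans on the simple connectivity of $\CC$ for the logarithm and square-root extractions and yields nothing reusable. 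The paper's metric, by contrast, does double duty: its completeness drives the Schottky theorem \ref{izr:schottky} and hence the Big Picard theorem \ref{izr:vel.pic}, and it underlies the complete hyperbolicity of $\CC\setminus\{0,1\}$ in the final section --- which is why the geometric alternative you mention in your closing parenthesis is precisely the route the paper takes.
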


The above theorem is a remarkable generalization of the Liouville theorem. It is simple to find entire functions whose range is the entire $\CC$; nonconstant polynomials, for instance. The exponential function is an example of an entire function whose range omits only one value, namely zero. But there does not exist a nonconstant entire function whose range omits $0$ and $1$. The latter statement is actually equivalent to the Little Picard theorem since $(b-a)z+a$ is a biholomorphic mapping between $\CC\setminus\{0,1\}$ and $\CC\setminus\{a,b\}$, where $a\neq b$. Theorem \ref{izr:mali.pic} was proved in 1879 by \textbf{Charles \'{E}.~Picard} (1856--1941), by using arguments based on the \emph{modular function}. A modular function $\lambda(z)$ is a covering map from the upper halfplane $\HH:=\{z\in\CC\colon \Im(z)>0\}$ onto $\CC\setminus\{0,1\}$. The function $g(z):=(\ie-z)(\ie+z)^{-1}$ maps $\HH$ biholomorphically onto $\DD$. If $f$ is an entire function whose range omits $0$ and $1$, then $f$ can be lifted by $\lambda$ to $\widetilde{f}\in\hol{\CC}{\HH}$, i.e. $\lambda\circ\widetilde{f}=f$. Since $g\circ\widetilde{f}$ is constant according to the Liouville theorem, $\widetilde{f}$ is constant and therefore $f$ is also constant. The nontrivial and technically challenging part of the proof is the construction of such $\lambda$. One can find a construction in \cite[\S7.3.4]{AhlCplx}, where Theorem \ref{izr:mali.pic} is proved in that way. This is why mathematicians searched for ``elementary'' proofs that avoid modular function.

The name of Theorem \ref{izr:mali.pic} suggests that a similar theorem named after Picard exists.

\begin{theorem}[Big Picard theorem]
\label{izr:vel.pic}
In the neighborhood of an isolated essential singularity a holomorphic function takes every value in $\CC$ infinitely often with no more than one exception.
\end{theorem}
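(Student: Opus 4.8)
The plan is to reduce the statement to a removable-singularity problem for maps into the thrice-punctured plane, and then to exploit the distance-decreasing property supplied by the Ahlfors lemma (Theorem~\ref{izr:ahlfors}) together with the completeness of the metrics involved.

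First I would normalise the situation. Translating, we may assume the essential singularity sits at $0$, so $f$ is holomorphic on some punctured disc $\Dr{r}^\ast$. The assertion ``takes every value infinitely often with at most one exception'' is equivalent to the statement that $f$ cannot, on any punctured neighbourhood of $0$, omit two distinct values: if two values $a\neq b$ were each attained only finitely often, then on a smaller disc $\Dr{\rho}^\ast$ the function would omit both of them. Composing with the affine isomorphism $w\mapsto(w-a)/(b-a)$ we obtain $g\in\hol{\Dr{\rho}^\ast}{\CC\setminus\{0,1\}}$, exactly as in the reduction used for the Little Picard theorem (Theorem~\ref{izr:mali.pic}). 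It therefore suffices to show that such a $g$ has a removable singularity or a pole at $0$, for then $f=\phi^{-1}\circ g$ does as well, contradicting the hypothesis that the singularity at $0$ is essential.

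The geometric input is that $\CC\setminus\{0,1\}$ carries a complete conformal metric of Gauss curvature at most $-1$, while the punctured disc $\Dr{\rho}^\ast$ carries its own complete Poincar\'e metric, with density $\bigl(|z|\log(1/|z|)\bigr)^{-1}$, lifted from its universal cover. By the Ahlfors lemma the holomorphic map $g$ is distance-decreasing between these two metrics. The crucial elementary estimate is that the Poincar\'e length of the circle $\{|z|=t\}$ in $\Dr{\rho}^\ast$ equals $2\pi/\log(1/t)$, which tends to $0$ as $t\to0$; hence the hyperbolic length in $\CC\setminus\{0,1\}$ of each image loop $g(\{|z|=t\})$ also tends to $0$. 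I would then combine this shrinking of the image loops with completeness of the target metric to show that the cluster set of $g$ at $0$ collapses to a single point $p\in\cp{1}$, so that $g$ extends continuously, and hence (by Riemann's removable singularity theorem, or as a pole) holomorphically, to a map $\overline g\colon\Dr{\rho}\to\cp{1}$; according to whether $p\in\CC$ or $p=\infty$ this exhibits the singularity as removable or as a pole.

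The main obstacle is precisely this last extension step, that is, passing from ``image loops of vanishing hyperbolic diameter'' to ``a genuine limit in $\cp{1}$.'' The difficulty is that $p$ may lie on the boundary set $\{0,1,\infty\}$ of $\CC\setminus\{0,1\}$ inside $\cp{1}$, where the metric degenerates, so convergence cannot be read off from completeness of $\CC\setminus\{0,1\}$ alone; one must control the behaviour of $g$ near each of the three punctures and rule out that the loops $g(\{|z|=t\})$ oscillate between distinct boundary points. This is exactly the phenomenon that the notion of a \emph{hyperbolically embedded} subspace is designed to handle, and it explains why completeness of the metric, rather than merely its negative curvature, is indispensable here. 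Once the extension $\overline g$ is in hand the conclusion is immediate, and running the reduction backwards shows that in every punctured neighbourhood of an essential singularity at most one value of $\CC$ can fail to be assumed infinitely often.
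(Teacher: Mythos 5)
Your reduction to a removable-singularity statement for $g\in\hol{\Dr{\rho}^\ast}{\ppC}$ is correct, and so is the length estimate: the Ahlfors lemma (applied to $g$ composed with the universal covering $\DD\to\Dr{\rho}^\ast$, which is the routine way to transfer it to the punctured disc) shows that the loops $g(\{|z|=t\})$ have length tending to $0$ in the metric \eqref{ena:met.ppC}. But there is a genuine gap exactly where you yourself locate ``the main obstacle'': the passage from shrinking loops to a single limit $p\in\cp{1}$ and a continuous extension is never carried out; you only name the machinery (hyperbolic imbedding, i.e.\ Kiernan's extension theorem) that is designed to handle it. That machinery is nowhere proved in the paper --- it appears only as an unproved remark in the closing survey section --- and your proposal supplies no substitute argument. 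The missing content is substantial (it is Kwack's theorem): one must (i) compare \eqref{ena:met.ppC} near each of the points $0,1,\infty$ with the punctured-disc model, so that small hyperbolic length of a loop accumulating at a puncture really forces small spherical diameter (near a puncture the metric degenerates relative to the spherical one, and horocycle-type loops of arbitrarily small hyperbolic length encircle the puncture, so this is not automatic); (ii) show that all the loops $g(\{|z|=t_n\})$ collapse to the \emph{same} point, e.g.\ via connectedness of the cluster set of $g$ at $0$; and (iii) propagate the smallness from the circles to the annuli between them by the maximum principle (or an argument-principle count), which is what finally yields the continuous extension and then Riemann's theorem. None of these steps appears in your proposal, so the proof is incomplete at its decisive point.

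For comparison, the paper reaches the Big Picard theorem by a different, self-contained route built on what it has already proved: completeness of \eqref{ena:met.ppC} together with the Hopf-Rinow theorem \ref{izr:HR} gives the Schottky theorem \ref{izr:schottky}; Schottky, Montel \ref{izr:montel} and Hurwitz give the Normality theorem \ref{izr:os.krit.nor}; and the Big Picard theorem (in Julia's sharper form) then follows by applying normality to the rescaled maps $f_n(z)=f(\varepsilon 2^{-n}z)$ on a fixed annulus and using the maximum principle on the circles $|z|=\varepsilon 2^{-n}$ to force a removable singularity for $f$ or for $1/f$, contradicting essentiality. If you wish to keep your geometric approach you must actually prove Kwack's extension lemma; otherwise the normal-families argument is the one that closes the proof with the tools the paper establishes.
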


Similarly to the relation between the Liouville theorem and the Little Picard theorem, there is a weaker and more accessible theorem in the case of the Big Picard theorem. We know that a holomorphic function on $\Omega\setminus\{a\}$ has in $a$ one and only one type of isolated singularities: \emph{removable singularity}, \emph{pole} and \emph{essential singularity}. In the latter case, the limit $\lim_{z\to a}|f(z)|$ does not exist and this happens if and only if the image of the neighborhood of the point $a$ is dense in $\CC$. This proposition is also known as the Sohocki-Casorati-Weierstrass theorem \cite[Proposition 2.4.4]{BerGay1991}. Theorem \ref{izr:vel.pic} can be reformulated as a \emph{meromorphic extension: if a holomorphic function in the neighborhood of an isolated essential singularity omits two distinct values, then the singularity is removable or it is a pole}. In this case, a function becomes meromorphic.

The article is self-contained, very little of elementary complex analysis is assumed. Starting with the definition of the Poincar\'{e} metric on a disc, we calculate the corresponding distance and state the Schwarz-Pick lemma. We then prove some properties of inner distances, the most important of which is the \emph{Hopf-Rinow theorem} \ref{izr:HR}. Next we prove Ahlfors' lemma and give some applications of it: proof of Bloch's theorem, \emph{Landau's theorems} \ref{thm:landau1} and \ref{thm:landau2}, \emph{Schottky's theorem} \ref{izr:schottky} and Picard's theorems. We conclude the article with some properties of hyperbolic complex manifolds, especially those connected with Picard's theorems.

\section{The Poincar\'{e} metric on a disc}

In this section the Poincar\'{e} metric on a disc is introduced and the corresponding distance is calculated in order to apply the Schwarz-Pick lemma.

Introduce $\RR^+:=\{x\in\RR\colon x>0\}$ and $\RR_0^+:=\RR^+\cup\{0\}$. \np{The Poincar\'{e} metric} on $\Dr{r}$ is
\begin{equation}
\label{ena:poi}
d\rho_r^2 := \frac{4r^2|\dif{z}|^2}{(r^2 - |z|^2)^2}.
\end{equation}
This is a form of a \np{Hermitian pseudometric}, which is on domain $\Omega\subseteq\CC$ defined by
\begin{equation}
\label{hermet}
ds_{\Omega}^2 := 2\lambda(z)|\dif{z}|^2
\end{equation}
where $\lambda(z)\in\Cc^2(\Omega,\RR_0^+)$ is twice real-differentiable function with $\lambda(z)=\overline{\lambda(z)}$ and $Z(\lambda):=\{z\in\Omega\colon\lambda(z)=0\}$ is a discrete set. If $Z(\lambda)=\varnothing$, then $ds_{\Omega}^2$ is said to be a \np{Hermitian metric}. We can observe that \eqref{ena:poi} is really a Hermitian metric. For the sake of simplicity, let us say $d\rho^2:=d\rho_1^2$.

Let $\Omega_1$ and $\Omega_2$ be two domains on $\CC$ and $f\in\hol{\Omega_1}{\Omega_2}$. \np{The pullback} of arbitrary pseudometric $ds_{\Omega_2}^2$ is defined by $f^\ast(ds_{\Omega_2}^2):=2\lambda(f(z))|f'(z)|^2|\dif{z}|^2$, which is a pseudometric on $\Omega_1$. Poincar\'{e} noticed that for \emph{M\"{o}bius transformations}
\[
     \varphi_{a}(z):=\frac{z-a}{1-\bar{a}z},
\]
where $a\in\DD$, there is $\varphi_{a}\in\Aut{\DD}$ and $\varphi_{a}^\ast(d\rho^2)=d\rho^2$. By $\Aut{\DD}$ a family of holomorphic automorphisms of a disc is denoted. Therefore M\"{o}bius transformations are isometries for the Poincar\'{e} metric. In 1884 Poincar\'{e} proved that
\[
     \Aut{\DD} = \{e^{\ie a}\varphi_{b}(z)\colon a\in\RR,b\in\DD\}.
\]
This follows from the Schwarz lemma \cite[Examples 2.3.12]{BerGay1991}.

A pseudodistance can always be assigned to a Hermitian pseudometric. The process is described in what follows. A pseudodistance differs from the distance in metric spaces only in that the distance between two different points might be zero.

Let $\Omega\subseteq\CC$ be an arbitrary domain and $x,y\in\Omega$ arbitrary points. The mapping $\gamma\colon[0,1]\rightarrow\Omega$ is called \emph{$\Cc^n$-path from $x$ to $y$} for $n\geq0$ if $\gamma(t)$ is $n$-times differentiable mapping on $(0,1)$ and $\gamma(0)=x,\gamma(1)=y$. In the case $n=0$ we speak about $\Cc$-paths. \emph{The concatenation of $\Cc^n$-paths} $\gamma_1$ from $x$ to $y$ and $\gamma_2$ from $y$ to $z$ is $\Cc$-path
\[
  (\gamma_1\ast\gamma_2)(t) :=
  \left\{ \begin{array}{rl}
  \gamma_1(2t), & t\in[0,1/2] \\
  \gamma_2(2t-1), & t\in[1/2,1]
  \end{array}
  \right.
\]
from $x$ to $z$. \emph{Piecewise $\Cc^n$-path} $\gamma$ from $x$ to $y$ is $\gamma:=\gamma_1\ast\cdots\ast\gamma_k$ where $\gamma_1,\ldots,\gamma_k$ are $\Cc^n$-paths and $\gamma(0)=x,\gamma(1)=y$.

Assume that  domain $\Omega$ is equipped with a Hermitian pseudometric $ds_\Omega^2$. Let $\gamma\colon[0,1]\rightarrow\Omega$ be a piecewise $\Cc^1$-path from $x$ to $y$. \emph{The length} of $\gamma$ is defined by
\[
     L_{ds_\Omega^2}(\gamma) := \int_0^1 \sqrt{2(\lambda\circ\gamma)}|\dot{\gamma}| \dif{t}.
\]
The pseudodistance between the points is defined by $d_\Omega(x,y):=\inf L_{ds_\Omega^2}(\gamma)$, where the infimum goes through all piecewise $\Cc^1$-paths $\gamma$ from $x$ to $y$.

Let domains $\Omega_1,\Omega_2\subseteq\CC$ be  equipped with pseudometrics $ds_{\Omega_1}^2=2\lambda_1(z)|\dif{z}|^2$ and $ds_{\Omega_2}^2=2\lambda_2(z)|\dif{z}|^2$. Let there be points $x,y\in\Omega_1$, $f\in\hol{\Omega_1}{\Omega_2}$ and a piecewise $\Cc^1$-path $\gamma\colon[0,1]\rightarrow\Omega_1$ from $x$ to $y$. Assume that $f^\ast(ds_{\Omega_2}^2)\leq ds_{\Omega_1}^2$. Then $f(\gamma)$ is a piecewise $\Cc^1$-path from $f(x)$ to $f(y)$. We have
\begin{equation}
\label{ena:razdalja}
d_{\Omega_2}(f(x),f(y))\leq \int_0^1 \sqrt{2(\lambda_2\circ f\circ\gamma)}|f'(\gamma)||\dot{\gamma}|\dif{t} \leq \int_0^1\sqrt{2(\lambda_1\circ\gamma)}|\dot{\gamma}|\dif{t}.
\end{equation}
Because this is valid for every such path, it follows
\begin{equation}
\label{ena:skrcitev.met}
d_{\Omega_2}(f(x),f(y))\leq d_{\Omega_1}(x,y).
\end{equation}
If $f\in\hol{\Omega_1}{\Omega_2}$ is a biholomorphic mapping and $f$ is an isometry for pseudometrics i.e. $f^\ast(ds_{\Omega_2}^2)=ds_{\Omega_1}^2$, then we can, with similar inequality as \eqref{ena:razdalja}, but on inverse mapping $f^{-1}$, obtain $d_{\Omega_2}(f(x),f(y))= d_{\Omega_1}(x,y)$. In this case $f$ is also an isometry for the induced pseudodistances.

To the Poincar\'{e} metric on a disc we can explicitly write down the distance function between arbitrary points $p,q\in\DD$. We denote it with $\rho(p,q)$ and call it the \np{Poincar\'{e} distance}. The proposition below will show that it is expressible with an \emph{area hyperbolic tangent} \[
      \artanh{(x)}:=\frac{1}{2}\log{\frac{1+x}{1-x}}, \; x\in(-1,1).
\]
This function is increasing with zero at $x=0$. It is also $\lim_{x\to-1}\artanh(x)=-\infty$ and $\lim_{x\to1}\artanh(x)=\infty$.

\begin{proposition}
For arbitrary points $p,q\in\DD$, the Poincar\'{e} distance is
\begin{equation}
\label{equ:poi}
\rho(p,q) = \log{\frac{|1 - \bar{p}q| + |p-q|}{|1 - \bar{p}q| - |p-q|}} = \log{\frac{1+|\varphi_p(q)|}{1-|\varphi_p(q)|}} = 2\artanh{|\varphi_p(q)|}. \end{equation}
\end{proposition}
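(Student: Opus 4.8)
The second and third equalities are essentially definitional, so I would dispose of them first. Since $\varphi_p(q)=(q-p)/(1-\bar p q)$, we have $|\varphi_p(q)|=|p-q|/|1-\bar p q|$; dividing the numerator and denominator of the first expression by $|1-\bar p q|$ turns it into the middle one, and the middle one equals $2\artanh{|\varphi_p(q)|}$ directly from the definition of $\artanh$. All the content therefore lies in the first equality, i.e.\ in actually computing $\rho(p,q)$.

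The plan is to exploit the fact, recorded earlier, that each M\"obius transformation $\varphi_a$ is an isometry of $d\rho^2$ and hence of the induced distance. Applying $\varphi_p$, which sends $p$ to $0$, I would reduce to $\rho(p,q)=\rho(0,\varphi_p(q))$, so it suffices to compute $\rho(0,w)$ for an arbitrary $w\in\DD$. Because rotations $z\mapsto e^{\ie\theta}z$ also preserve $d\rho^2$, I may rotate $w$ onto the nonnegative real axis and assume $w=s\in[0,1)$. For the radial segment $\gamma(t)=ts$, the length integral $\int_0^1 2|\dot\gamma|/(1-|\gamma|^2)\,\dif{t}$ collapses, after the substitution $u=ts$, to $\int_0^s 2/(1-u^2)\,\dif{u}=\log\frac{1+s}{1-s}=2\artanh{s}$. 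This already gives the upper bound $\rho(0,s)\le 2\artanh{s}$.

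The main obstacle is the matching lower bound, i.e.\ showing that the radial segment is length-minimizing. Here I would argue that for any piecewise $\Cc^1$-path $\gamma$ from $0$ to $s$, writing $r(t):=|\gamma(t)|$, one has $|\dot\gamma|\ge|\dot r|$ pointwise, since the full speed dominates its radial component (concretely $\dot r=\operatorname{Re}(\bar\gamma\dot\gamma)/|\gamma|$, whence $|\dot r|\le|\dot\gamma|$). Because the denominator depends only on $r$, this yields
\[
L_{d\rho^2}(\gamma)=\int_0^1 \frac{2|\dot\gamma|}{1-|\gamma|^2}\,\dif{t}\ge\int_0^1 \frac{2\dot r}{1-r^2}\,\dif{t}=2\artanh{r(1)}-2\artanh{r(0)}=2\artanh{s},
\]
where the middle integral is recognized as $\int_0^1 \frac{\dif{}}{\dif{t}}\bigl(2\artanh{r(t)}\bigr)\,\dif{t}$ via the chain rule, and $r(0)=0$, $r(1)=s$. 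Taking the infimum over all such paths gives $\rho(0,s)\ge 2\artanh{s}$, matching the upper bound. The only subtlety is that $t\mapsto|\gamma(t)|$ need not be $\Cc^1$ where $\gamma$ meets the origin, but this is harmless: such points are handled by splitting the integral, or by noting any detour through $0$ can only lengthen the path. Combining the two bounds gives $\rho(0,s)=2\artanh{s}$, and undoing the two isometric reductions recovers $\rho(p,q)=2\artanh{|\varphi_p(q)|}$, which by the first paragraph equals the other two expressions.
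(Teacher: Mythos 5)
Your proposal is correct and follows essentially the same route as the paper: reduce to $\rho(0,s)$ via M\"obius and rotation isometries, compute the length of the radial segment, and obtain the matching lower bound by comparing each competing path with its projection. The only difference is that you project onto the radial coordinate $r=|\gamma|$ (which forces you to address the non-differentiability of $r$ where the path meets the origin, as you do), whereas the paper projects onto the real part $x(t)$, using $\sqrt{\dot x^2+\dot y^2}\ge\dot x$ together with $1-x^2-y^2\le 1-x^2$, which avoids that subtlety entirely.
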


\begin{proof}
The second and third equalities are clear from the definitions, so the first equality remains to be proved.

Bearing in mind that rotations and M\"{o}bius transformations are isometries for the Poincar\'{e} metric, it is sufficient to show that \begin{equation}
\label{equ:poi.vmes}
\rho(0,a)=\log{\frac{1+a}{1-a}}
\end{equation}
for every $a\in[0,1)$. Because
\[
    \rho\left(0,\left|\frac{p-q}{1-\bar{p}q}\right|\right) = \rho\left(\varphi_{-p}(0),\varphi_{-p}\left(\frac{p-q}{1-\bar{p}q}\right)\right) = \rho(p,q),
\]
equation \eqref{equ:poi} follows from \eqref{equ:poi.vmes}. Let $\gamma(t):=x(t)+\ie y(t)$ be a piecewise $\Cc^1$-path from $0$ to $a$ and $\bar\gamma(t):=at$. Then
\begin{flalign*}
L_{d\rho^2}(\gamma)=\int_0^1 \frac{2\sqrt{\dot{x}^2(t)+\dot{y}^2(t)}}{1-x^2(t)-y^2(t)}\dif{t} &\geq \int_0^1 \frac{2\dot{x}(t)\dif{t}}{1-x^2(t)} \\
&= \log\frac{1+x(t)}{1-x(t)}\Bigl|_{t=0}^{t=1}=\log\frac{1+a}{1-a}=L_{d\rho^2}(\bar\gamma).
\end{flalign*} The inequality above becomes equality if and only if $y\equiv0$. If $L_{d\rho^2}(\gamma)=\rho(0,a)$, then $L_{d\rho^2}(\bar\gamma)=\rho(0,a)$, which is equivalent to \eqref{equ:poi.vmes}.
\end{proof}

The above proof makes it evident that the shortest path in the Poincar\'{e} metric from $0$ to $a\in[0,1)$ is a chord between those points. We call the shortest path in arbitrary metric \emph{a geodesic}. Using a proper rotation and M\"{o}bius transformation, we map this chord into the unique geodesic between arbitrary points on a disc. On Figure \ref{fig:disc1} we observe some geodesics through $0$ and on Figure \ref{fig:disc2} an action of M\"{o}bius transformation on previous geodesics. For a general domain and metric on it, the geodesic does not always exist; think about a nonconvex domain, equipped with the Euclidean metric. If it exists, it may not be the only one. Some of these possible domains and metrics are discussed in \cite{KeenLa2007}.

In 1916, \textbf{Georg A.~Pick} (1859--1942) connected the Schwarz lemma and the Poincar\'{e} metric in the so-called \emph{Schwarz-Pick lemma}. Observe that assumption about centrality condition $f(0)=0$ is not necessary.

\begin{theorem}[Schwarz-Pick lemma]
\label{izr:sch.pick}
Assume $f\in\hol{\DD}{\DD}$.
\begin{enumerate}
\item Then
\begin{equation}
\label{ena:sch.pick}
\rho(f(p),f(q))\leq\rho(p,q)
\end{equation}
for every $p,q\in\DD$ and
\begin{equation}
\label{ena:sch.pick.1}
|f'(z)| \leq \frac{1-|f(z)|^2}{1-|z|^2}
\end{equation}
for every $z\in\DD$.
\item If $p\neq q$ exist such that the equality in \eqref{ena:sch.pick} is valid or such $z_0$ exists that the equality in \eqref{ena:sch.pick.1} is valid, then $f\in\Aut{\DD}$.
\end{enumerate}
\end{theorem}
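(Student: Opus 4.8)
The plan is to derive the Schwarz-Pick lemma from the ordinary Schwarz lemma (Theorem \ref{schlema}) by precomposing and postcomposing $f$ with suitable Möbius transformations so as to normalize the relevant points to the origin. The key observation is that Möbius transformations $\varphi_a\in\Aut{\DD}$ are isometries for the Poincaré metric, so $\rho$ is invariant under them; this lets me reduce the general inequality \eqref{ena:sch.pick} to the centered case where Schwarz's lemma applies directly.

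First I would fix $p,q\in\DD$ and set $a:=f(p)$. I would then consider the auxiliary map $g:=\varphi_a\circ f\circ\varphi_p^{-1}\in\hol{\DD}{\DD}$, which satisfies $g(0)=\varphi_a(f(\varphi_p^{-1}(0)))=\varphi_a(f(p))=\varphi_a(a)=0$. By part (a) of Theorem \ref{schlema} we have $|g(w)|\leq|w|$ for all $w\in\DD$. Evaluating at $w:=\varphi_p(q)$ gives $|g(\varphi_p(q))|=|\varphi_a(f(q))|\leq|\varphi_p(q)|$. Since $\artanh$ is increasing, applying it (times two) to both sides and using formula \eqref{equ:poi}, namely $\rho(x,y)=2\artanh|\varphi_x(y)|$, this reads $\rho(f(p),f(q))=2\artanh|\varphi_{f(p)}(f(q))|\leq 2\artanh|\varphi_p(q)|=\rho(p,q)$, which is exactly \eqref{ena:sch.pick}.

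For the infinitesimal inequality \eqref{ena:sch.pick.1}, I would use the same $g$ and apply the derivative bound $|g'(0)|\leq1$ from Theorem \ref{schlema}(a). Computing $g'(0)$ by the chain rule, $g'(0)=\varphi_a'(a)\,f'(p)\,(\varphi_p^{-1})'(0)$, and using the explicit derivatives of Möbius maps (one checks $\varphi_a'(a)=(1-|a|^2)^{-1}$ and $(\varphi_p^{-1})'(0)=1-|p|^2$), the bound $|g'(0)|\leq1$ rearranges to $|f'(p)|\leq(1-|f(p)|^2)/(1-|p|^2)$. Writing $z$ for $p$ gives \eqref{ena:sch.pick.1}. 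Alternatively, \eqref{ena:sch.pick.1} follows from \eqref{ena:sch.pick} by dividing by $\rho(p,q)$ and letting $q\to p$, comparing the leading terms of the Poincaré distance against the Euclidean infinitesimal $|dz|$; the direct derivative computation is cleaner and I would present that.

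For the rigidity statement in part (b), I would trace equality back through the reduction. If equality holds in \eqref{ena:sch.pick} for some $p\neq q$, then $|g(\varphi_p(q))|=|\varphi_p(q)|$ with $\varphi_p(q)\in\DD^\ast$, so the equality case of Theorem \ref{schlema}(b) forces $g(w)=bw$ for some $b\in\partial\DD$; similarly, equality in \eqref{ena:sch.pick.1} at some $z_0$ gives $|g'(0)|=1$ and the same conclusion. In either case $g$ is a rotation, hence $g\in\Aut{\DD}$, and therefore $f=\varphi_a^{-1}\circ g\circ\varphi_p$ is a composition of automorphisms, so $f\in\Aut{\DD}$. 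The only mild obstacle I anticipate is the bookkeeping of the Möbius derivatives and confirming that the normalization $g(0)=0$ is achieved with the correct choice of pre- and post-composition; once the identity $\rho(x,y)=2\artanh|\varphi_x(y)|$ from \eqref{equ:poi} is in hand, the rest is a transcription of the classical Schwarz lemma and its equality case.
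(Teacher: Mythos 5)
Your proposal is correct and follows essentially the same route as the paper: both conjugate $f$ by M\"{o}bius transformations to form the auxiliary map $\varphi_{f(p)}\circ f\circ\varphi_{-p}$ (your $g$, the paper's $F$, identical since $\varphi_{-p}=\varphi_p^{-1}$), apply the classical Schwarz lemma to it, and translate back via $\rho(x,y)=2\artanh|\varphi_x(y)|$, including the same treatment of the equality case. Your write-up is somewhat more explicit than the paper's (which frames the two lemmas as equivalent and leaves the derivative computation and rigidity step terse), but the underlying argument is the same.
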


Choose arbitrary $f\in\hol{\DD}{\DD}$ and arbitrary points $p,q\in\DD$. Define $F(z):=(\varphi_{f(p)}\circ f\circ \varphi_{-p})(z)$. Then $F\in\hol{\DD}{\DD}$, $F(0)=0$ and
\[
    F'(0)=\frac{f'(p)(1-|p|^2)}{1-|f(p)|^2}.
\]
According to the Schwarz lemma we have $|\varphi_{f(p)}(f(z))|\leq|\varphi_p(z)|$. This is equivalent to \eqref{ena:sch.pick}. If $f(0)=0$, then at \eqref{ena:sch.pick} we get $\rho(0,f(z))\leq\rho(0,z)$, which is equivalent to $|f(z)|\leq|z|$ and at \eqref{ena:sch.pick.1} we get $|f'(0)|\leq1$. Therefore a part (a) of the Schwarz lemma is equivalent to a part (a) of the Schwarz-Pick lemma. To prove that parts (b) of both lemmas are equivalent, note that if $f\in\Aut{\DD}$ and $f(0)=0$, then $f$ is rotation.

\begin{figure}[h!]
\subfigure[][Unit disc with geodesics through $0$ and some balls with center $0$ in Poincar\'{e} metric.]
{\label{fig:disc1}
\begin{tikzpicture}[scale=2.5]
\draw[fill=gray!25,dashed] (0,0) circle (1);
\draw[fill=white] (0,0) circle (0.905148);
\draw[fill=gray!25] (0,0) circle (0.848284);
\draw[fill=white] (0,0) circle (0.761594);
\draw[fill=gray!25] (0,0) circle (0.635149);
\draw[fill=white] (0,0) circle (0.462117);
\draw[fill=gray!25] (0,0) circle (0.244919);
\foreach \i in {0,...,7}
{
\draw[line width=0.25mm] ({-cos(22.5*\i)},{-sin(22.5*\i)})--({cos(22.5*\i)},{sin(22.5*\i)});
}
\end{tikzpicture}
}
\hfill
\subfigure[][Unit disc with geodesics through $(1+\ie)/2$ and balls with center $(1+\ie)/2$ in Poincar\'{e} metric with the same radii as in Figure \ref{fig:disc1}.]
{\label{fig:disc2}
\begin{tikzpicture}[scale=2.5]
\draw[fill=gray!25,dashed] (0,0) circle (1);
\draw[fill=white]
plot [smooth] coordinates{(0.783998,0.588484)(0.777041,0.598397)(0.770087,0.607983)(0.763131,0.61727)(0.756164,0.626286)(0.74918,0.635054)(0.74217,0.643596)(0.735127,0.651934)(0.72804,0.660085)(0.720901,0.668068)(0.713699,0.675899)(0.706424,0.683593)(0.699065,0.691164)(0.69161,0.698625)(0.684046,0.70599)(0.67636,0.713269)(0.668537,0.720475)(0.660564,0.727618)(0.652423,0.734707)(0.644097,0.741753)(0.635567,0.748765)(0.626813,0.75575)(0.617813,0.762717)(0.608542,0.769674)(0.598975,0.776628)(0.589083,0.783585)(0.578833,0.790551)(0.568193,0.797531)(0.557122,0.804529)(0.545579,0.811548)(0.533516,0.818591)(0.520881,0.825656)(0.507615,0.832743)(0.493651,0.839847)(0.478916,0.846959)(0.463326,0.854069)(0.446787,0.861158)(0.42919,0.868204)(0.410414,0.875174)(0.390319,0.882023)(0.368746,0.888695)(0.345514,0.895112)(0.320413,0.901173)(0.293204,0.906745)(0.263613,0.91165)(0.231326,0.915659)(0.195986,0.918462)(0.157192,0.919654)(0.114495,0.918695)(0.0674106,0.914867)(0.0154354,0.907213)(-0.0419142,0.89446)(-0.105026,0.87492)(-0.174055,0.846376)(-0.248699,0.805964)(-0.32782,0.750096)(-0.408885,0.674518)(-0.487201,0.574688)(-0.555077,0.446743)(-0.60135,0.289364)(-0.612151,0.106497)(-0.574008,-0.0900279)(-0.479354,-0.28027)(-0.332006,-0.4406)(-0.148208,-0.551893)(0.0488301,-0.606449)(0.237382,-0.608913)(0.403053,-0.571656)(0.539776,-0.508873)(0.647618,-0.432699)(0.729925,-0.351841)(0.791176,-0.271797)(0.835768,-0.195655)(0.867503,-0.124899)(0.889457,-0.0600319)(0.904027,-0.00100539)(0.913045,0.052517)(0.917896,0.101)(0.919622,0.144947)(0.919005,0.184852)(0.916632,0.221173)(0.912946,0.254327)(0.908279,0.284683)(0.902885,0.312569)(0.896954,0.338269)(0.890631,0.362033)(0.884025,0.384078)(0.877222,0.404594)(0.870283,0.423746)(0.863256,0.441679)(0.856176,0.458521)(0.84907,0.474382)(0.841957,0.489362)(0.83485,0.503546)(0.827758,0.517012)(0.820685,0.529828)(0.813636,0.542055)(0.80661,0.553747)(0.799606,0.564953)(0.792621,0.575718)(0.785651,0.586079)};
\draw[fill=gray!25]
plot [smooth] coordinates{(0.773559,0.581472)(0.766958,0.591375)(0.760331,0.600945)(0.753672,0.610212)(0.746978,0.619201)(0.740242,0.627935)(0.733459,0.636436)(0.726622,0.644724)(0.719724,0.652817)(0.712756,0.660731)(0.705709,0.668484)(0.698574,0.676089)(0.691341,0.683559)(0.683999,0.690908)(0.676536,0.698147)(0.668939,0.705288)(0.661196,0.71234)(0.653291,0.719312)(0.645209,0.726215)(0.636934,0.733055)(0.628446,0.739841)(0.619726,0.746579)(0.610753,0.753276)(0.601504,0.759936)(0.591952,0.766566)(0.58207,0.773167)(0.571827,0.779744)(0.561189,0.786298)(0.550121,0.792829)(0.538579,0.799336)(0.52652,0.805815)(0.513893,0.812262)(0.500641,0.818666)(0.486703,0.825017)(0.472008,0.831297)(0.456478,0.837486)(0.440026,0.843554)(0.422553,0.849466)(0.403948,0.855172)(0.384087,0.860613)(0.362828,0.86571)(0.340013,0.870365)(0.315464,0.87445)(0.288983,0.877804)(0.260345,0.880219)(0.229307,0.88143)(0.195599,0.881096)(0.158938,0.878781)(0.119028,0.873923)(0.0755847,0.8658)(0.0283653,0.853489)(-0.0227811,0.835814)(-0.077827,0.811293)(-0.136435,0.778091)(-0.19776,0.734002)(-0.260175,0.6765)(-0.320918,0.602931)(-0.37571,0.510948)(-0.418501,0.399293)(-0.441618,0.26895)(-0.436722,0.12445)(-0.39681,-0.0253116)(-0.318945,-0.167684)(-0.206448,-0.288842)(-0.0689731,-0.377642)(0.0800355,-0.428765)(0.227223,-0.443453)(0.362431,-0.427792)(0.479871,-0.389982)(0.577638,-0.33804)(0.656458,-0.27854)(0.718455,-0.216271)(0.76625,-0.154408)(0.802437,-0.0948768)(0.829326,-0.0387201)(0.848863,0.0135975)(0.862631,0.0619728)(0.87189,0.106514)(0.877634,0.147444)(0.880639,0.185039)(0.88151,0.219593)(0.880718,0.251395)(0.878629,0.280719)(0.87553,0.307816)(0.871643,0.332916)(0.867143,0.356226)(0.862165,0.377929)(0.856818,0.39819)(0.851183,0.417154)(0.845327,0.434951)(0.839301,0.451695)(0.833145,0.467489)(0.826889,0.482423)(0.820557,0.496579)(0.814167,0.510027)(0.807733,0.522834)(0.801262,0.535056)(0.794763,0.546746)(0.788239,0.557951)(0.781692,0.568712)(0.775122,0.579069)};
\draw[fill=white]
plot [smooth] coordinates{(0.756289,0.570679)(0.74674,0.586077)(0.737002,0.600675)(0.72707,0.614569)(0.716935,0.627843)(0.70658,0.640571)(0.695983,0.652816)(0.685116,0.664636)(0.673947,0.676078)(0.662436,0.687188)(0.650541,0.698001)(0.638209,0.708551)(0.625383,0.718863)(0.611998,0.728958)(0.597978,0.738853)(0.583237,0.748556)(0.567677,0.758068)(0.551184,0.767381)(0.533626,0.776475)(0.514853,0.785316)(0.494688,0.793851)(0.472924,0.802001)(0.449323,0.809652)(0.423604,0.816647)(0.39544,0.822762)(0.364454,0.827688)(0.330212,0.830998)(0.292224,0.832094)(0.249961,0.830146)(0.202882,0.824002)(0.150511,0.812063)(0.0925893,0.792134)(0.0293477,0.761259)(-0.0379992,0.715605)(-0.10647,0.650549)(-0.170229,0.561312)(-0.219506,0.444634)(-0.240546,0.301915)(-0.218394,0.14309)(-0.143616,-0.0118283)(-0.020009,-0.137774)(0.133933,-0.215585)(0.293158,-0.240575)(0.437159,-0.221608)(0.55542,-0.173538)(0.646159,-0.1103)(0.712468,-0.0419077)(0.759097,0.0256048)(0.790702,0.0891253)(0.811167,0.147363)(0.823496,0.200046)(0.829922,0.247415)(0.832074,0.289938)(0.831126,0.328154)(0.827925,0.362596)(0.823078,0.393755)(0.817021,0.422069)(0.81007,0.447918)(0.802451,0.471632)(0.794326,0.493494)(0.785811,0.513744)(0.776986,0.532592)(0.767905,0.550214)(0.758604,0.566764)};
\draw[fill=gray!25]
plot [smooth] coordinates{(0.727796,0.554906)(0.722782,0.564385)(0.717621,0.573543)(0.712318,0.582402)(0.706878,0.59098)(0.701304,0.599296)(0.695597,0.607366)(0.689758,0.615206)(0.683785,0.622828)(0.677676,0.630247)(0.671427,0.637473)(0.665033,0.644517)(0.658491,0.651389)(0.651792,0.658098)(0.64493,0.664649)(0.637896,0.671051)(0.630681,0.677309)(0.623274,0.683426)(0.615664,0.689407)(0.607838,0.695254)(0.599782,0.700969)(0.591481,0.706551)(0.582919,0.711999)(0.574077,0.71731)(0.564937,0.72248)(0.555478,0.727502)(0.545676,0.73237)(0.535507,0.737071)(0.524944,0.741594)(0.513959,0.745921)(0.502521,0.750033)(0.490597,0.753907)(0.47815,0.757514)(0.465143,0.76082)(0.451535,0.763784)(0.437282,0.766361)(0.422339,0.768491)(0.406659,0.77011)(0.390193,0.771137)(0.372891,0.77148)(0.354705,0.771028)(0.335588,0.769654)(0.315498,0.767205)(0.294402,0.763504)(0.272281,0.758345)(0.249134,0.751487)(0.224989,0.742654)(0.199912,0.73153)(0.174023,0.71776)(0.147515,0.700948)(0.12067,0.680669)(0.0938904,0.65648)(0.0677214,0.627942)(0.0428783,0.59466)(0.0202659,0.556334)(0.000983143,0.512831)(-0.0136969,0.464274)(-0.0223852,0.411131)(-0.0236821,0.354303)(-0.0163329,0.29517)(0.000581315,0.235576)(0.0274481,0.177721)(0.0639743,0.123962)(0.109132,0.0765416)(0.161232,0.0373138)(0.218118,0.0075162)(0.277437,-0.0123401)(0.336918,-0.0224528)(0.394596,-0.0236024)(0.448948,-0.016965)(0.498933,-0.0039155)(0.543962,0.0141407)(0.583821,0.0358881)(0.618575,0.0601724)(0.648483,0.0860307)(0.673918,0.112693)(0.695311,0.139566)(0.713106,0.166209)(0.727736,0.192305)(0.739607,0.217638)(0.749085,0.242069)(0.756499,0.265517)(0.762135,0.287944)(0.766246,0.309342)(0.769047,0.329728)(0.770723,0.34913)(0.771434,0.367588)(0.771315,0.385147)(0.77048,0.401856)(0.769028,0.417765)(0.767042,0.432921)(0.764592,0.447374)(0.761737,0.461169)(0.758528,0.47435)(0.755007,0.486959)(0.751209,0.499034)(0.747165,0.510613)(0.742899,0.521729)(0.738433,0.532414)(0.733783,0.542697)(0.728964,0.552605)};
\draw[fill=white]
plot [smooth] coordinates{(0.681304,0.534029)(0.677836,0.542495)(0.674137,0.550694)(0.670219,0.558637)(0.66609,0.566333)(0.661761,0.573793)(0.657236,0.581025)(0.652522,0.588037)(0.647623,0.594837)(0.64254,0.601431)(0.637275,0.607826)(0.631828,0.614026)(0.6262,0.620036)(0.620387,0.62586)(0.614388,0.631499)(0.608199,0.636956)(0.601816,0.642232)(0.595234,0.647326)(0.588446,0.652237)(0.581447,0.656962)(0.574228,0.661498)(0.566783,0.665839)(0.5591,0.669979)(0.551173,0.673911)(0.542989,0.677623)(0.534539,0.681105)(0.525812,0.684342)(0.516795,0.687319)(0.507477,0.690016)(0.497845,0.692411)(0.487887,0.694481)(0.477592,0.696197)(0.466948,0.697526)(0.455943,0.698432)(0.444569,0.698873)(0.432818,0.698803)(0.420685,0.69817)(0.408168,0.696915)(0.395271,0.694972)(0.382,0.69227)(0.368373,0.688729)(0.354415,0.68426)(0.340161,0.67877)(0.325662,0.672155)(0.310984,0.664307)(0.296214,0.655113)(0.281463,0.644456)(0.266869,0.632222)(0.252598,0.618301)(0.238854,0.602596)(0.225874,0.585028)(0.213933,0.565549)(0.203343,0.544151)(0.194444,0.520879)(0.1876,0.495844)(0.183182,0.469233)(0.181553,0.441324)(0.183043,0.412482)(0.187923,0.383165)(0.196379,0.353907)(0.208483,0.325303)(0.224179,0.297973)(0.243272,0.272532)(0.26543,0.249547)(0.290198,0.229504)(0.31703,0.212773)(0.345321,0.19959)(0.374449,0.190054)(0.403807,0.184128)(0.432842,0.181656)(0.46107,0.182391)(0.488099,0.186018)(0.513626,0.192181)(0.537436,0.200509)(0.559398,0.210634)(0.579447,0.222208)(0.597579,0.23491)(0.61383,0.248454)(0.628271,0.262591)(0.640994,0.277108)(0.652107,0.291829)(0.661724,0.306605)(0.669959,0.32132)(0.676928,0.33588)(0.682742,0.350213)(0.687503,0.364264)(0.69131,0.377992)(0.694253,0.39137)(0.696413,0.40438)(0.697865,0.41701)(0.698676,0.429257)(0.698908,0.441121)(0.698613,0.452606)(0.697841,0.46372)(0.696633,0.47447)(0.695029,0.484868)(0.693061,0.494924)(0.690759,0.504651)(0.688149,0.514061)(0.685254,0.523166)(0.682092,0.531978)};
\draw[fill=gray!25]
plot [smooth] coordinates{(0.607816,0.511763)(0.606387,0.517575)(0.604705,0.523249)(0.602779,0.528783)(0.600621,0.534171)(0.598238,0.539411)(0.595638,0.5445)(0.59283,0.549435)(0.58982,0.554212)(0.586613,0.55883)(0.583217,0.563285)(0.579636,0.567575)(0.575874,0.571696)(0.571935,0.575645)(0.567825,0.579417)(0.563545,0.58301)(0.559099,0.586417)(0.554491,0.589635)(0.549723,0.592657)(0.544797,0.595477)(0.539718,0.598089)(0.534486,0.600485)(0.529107,0.602658)(0.523582,0.604597)(0.517915,0.606294)(0.512111,0.607738)(0.506174,0.608918)(0.50011,0.609822)(0.493924,0.610437)(0.487623,0.610748)(0.481217,0.610742)(0.474715,0.610403)(0.468129,0.609715)(0.461471,0.60866)(0.454756,0.607222)(0.448003,0.605383)(0.441231,0.603125)(0.434464,0.600429)(0.427726,0.597279)(0.421047,0.593658)(0.41446,0.589548)(0.408001,0.584938)(0.40171,0.579814)(0.395631,0.574168)(0.389811,0.567993)(0.384303,0.56129)(0.379161,0.554062)(0.374443,0.546319)(0.370209,0.538079)(0.366521,0.529367)(0.36344,0.520215)(0.361028,0.510668)(0.359343,0.500777)(0.35844,0.490604)(0.358368,0.480221)(0.359169,0.469708)(0.360873,0.459155)(0.363501,0.448657)(0.367062,0.438314)(0.371548,0.428231)(0.376938,0.418511)(0.383197,0.409256)(0.390275,0.400564)(0.398105,0.392523)(0.406613,0.385215)(0.415711,0.378707)(0.425304,0.373054)(0.43529,0.368297)(0.445566,0.364461)(0.456029,0.361556)(0.466576,0.359579)(0.47711,0.358513)(0.487541,0.35833)(0.497784,0.358991)(0.507766,0.360449)(0.517421,0.362651)(0.526695,0.36554)(0.535541,0.369055)(0.543925,0.373133)(0.551817,0.377714)(0.559199,0.382736)(0.566059,0.388142)(0.57239,0.393874)(0.578193,0.399881)(0.583471,0.406114)(0.588233,0.412528)(0.592489,0.419081)(0.596254,0.425736)(0.599543,0.432459)(0.602371,0.439221)(0.604757,0.445994)(0.606719,0.452755)(0.608274,0.459483)(0.609441,0.466159)(0.610236,0.472769)(0.610677,0.479297)(0.61078,0.485733)(0.610561,0.492065)(0.610035,0.498286)(0.609216,0.504387)(0.608117,0.510363)};
\draw[line width=0.25mm]
plot [smooth]
coordinates{(0.,1.)(0.000999001,0.968409)(0.00398406,0.937006)(0.00891972,0.905978)(0.015748,0.875501)(0.0243902,0.845743)(0.034749,0.816857)(0.0467112,0.788981)(0.0601504,0.762235)(0.0749306,0.736721)(0.0909091,0.71252)(0.107939,0.689696)(0.125874,0.668293)(0.144568,0.648335)(0.16388,0.629834)(0.183673,0.612782)(0.203822,0.597162)(0.224205,0.582942)(0.244713,0.570083)(0.265246,0.558536)(0.285714,0.548246)(0.306037,0.539155)(0.326146,0.531199)(0.345978,0.524314)(0.365482,0.518435)(0.384615,0.513496)(0.403341,0.509432)(0.421631,0.50618)(0.439462,0.503678)(0.456817,0.501868)(0.473684,0.500693)(0.490056,0.500099)(0.505929,0.500035)(0.521302,0.500454)(0.536178,0.501311)(0.550562,0.502563)(0.56446,0.504172)(0.577881,0.506103)(0.590835,0.50832)(0.603332,0.510794)(0.615385,0.513496)(0.627005,0.516399)(0.638205,0.51948)(0.649,0.522717)(0.659401,0.526089)(0.669421,0.529579)(0.679076,0.533168)(0.688376,0.536843)(0.697337,0.540589)(0.705969,0.544394)(0.714286,0.548246)(0.722299,0.552135)(0.730022,0.556052)(0.737464,0.559988)(0.744637,0.563935)(0.751553,0.567888)(0.758221,0.571839)(0.764651,0.575783)(0.770852,0.579716)(0.776836,0.583632)(0.782609,0.587529)(0.78818,0.591402)(0.793559,0.595249)(0.798752,0.599067)};
\draw[line width=0.25mm]
plot [smooth] coordinates{(-0.595652,0.803242)(-0.545484,0.741356)(-0.493116,0.686835)(-0.439566,0.639311)(-0.385684,0.598311)(-0.332155,0.563304)(-0.27952,0.533733)(-0.228189,0.509041)(-0.178461,0.48869)(-0.130543,0.472174)(-0.0845673,0.459022)(-0.0406059,0.448808)(0.0013153,0.441144)(0.0412061,0.435687)(0.0791027,0.432132)(0.115061,0.430207)(0.149149,0.429678)(0.181446,0.430337)(0.212033,0.432005)(0.240997,0.434525)(0.268422,0.437761)(0.294395,0.441597)(0.318998,0.44593)(0.342311,0.450674)(0.364409,0.455753)(0.385368,0.461102)(0.405254,0.466664)(0.424134,0.472393)(0.442069,0.478247)(0.459117,0.484191)(0.475332,0.490195)(0.490764,0.496232)(0.50546,0.502282)(0.519466,0.508326)(0.532822,0.514347)(0.545567,0.520333)(0.557737,0.526272)(0.569364,0.532156)(0.580482,0.537976)(0.591118,0.543726)(0.6013,0.549401)(0.611054,0.554997)(0.620403,0.56051)(0.62937,0.565938)(0.637975,0.571279)(0.646238,0.576532)(0.654177,0.581696)(0.661808,0.586771)(0.669149,0.591756)(0.676213,0.596651)(0.683015,0.601458)(0.689568,0.606177)(0.695885,0.610808)(0.701976,0.615353)(0.707853,0.619812)(0.713527,0.624188)(0.719006,0.628482)(0.7243,0.632694)(0.729417,0.636826)(0.734366,0.64088)(0.739154,0.644857)(0.743789,0.648758)(0.748276,0.652586)(0.752623,0.656342)};
\draw[line width=0.25mm]
plot [smooth] coordinates{(-0.707107,-0.707107)(-0.586023,-0.586023)(-0.480979,-0.480979)(-0.388984,-0.388984)(-0.30775,-0.30775)(-0.235493,-0.235493)(-0.170803,-0.170803)(-0.11255,-0.11255)(-0.059819,-0.059819)(-0.0118603,-0.0118603)(0.0319458,0.0319458)(0.0721164,0.0721164)(0.109086,0.109086)(0.143222,0.143222)(0.17484,0.17484)(0.204206,0.204206)(0.231554,0.231554)(0.257085,0.257085)(0.280974,0.280974)(0.303375,0.303375)(0.324422,0.324422)(0.344234,0.344234)(0.362918,0.362918)(0.380566,0.380566)(0.397263,0.397263)(0.413084,0.413084)(0.428095,0.428095)(0.442357,0.442357)(0.455926,0.455926)(0.46885,0.46885)(0.481174,0.481174)(0.492939,0.492939)(0.504182,0.504182)(0.514938,0.514938)(0.525237,0.525237)(0.535107,0.535107)(0.544576,0.544576)(0.553666,0.553666)(0.562401,0.562401)(0.570801,0.570801)(0.578884,0.578884)(0.586668,0.586668)(0.59417,0.59417)(0.601404,0.601404)(0.608385,0.608385)(0.615125,0.615125)(0.621638,0.621638)(0.627933,0.627933)(0.634023,0.634023)(0.639916,0.639916)(0.645623,0.645623)(0.651152,0.651152)(0.65651,0.65651)(0.661707,0.661707)(0.666749,0.666749)(0.671643,0.671643)(0.676395,0.676395)(0.681011,0.681011)(0.685498,0.685498)(0.68986,0.68986)(0.694102,0.694102)(0.698231,0.698231)(0.702249,0.702249)(0.706162,0.706162)};
\draw[line width=0.25mm]
plot [smooth] coordinates{(0.803242,-0.595652)(0.741356,-0.545484)(0.686835,-0.493116)(0.639311,-0.439566)(0.598311,-0.385684)(0.563304,-0.332155)(0.533733,-0.27952)(0.509041,-0.228189)(0.48869,-0.178461)(0.472174,-0.130543)(0.459022,-0.0845673)(0.448808,-0.0406059)(0.441144,0.0013153)(0.435687,0.0412061)(0.432132,0.0791027)(0.430207,0.115061)(0.429678,0.149149)(0.430337,0.181446)(0.432005,0.212033)(0.434525,0.240997)(0.437761,0.268422)(0.441597,0.294395)(0.44593,0.318998)(0.450674,0.342311)(0.455753,0.364409)(0.461102,0.385368)(0.466664,0.405254)(0.472393,0.424134)(0.478247,0.442069)(0.484191,0.459117)(0.490195,0.475332)(0.496232,0.490764)(0.502282,0.50546)(0.508326,0.519466)(0.514347,0.532822)(0.520333,0.545567)(0.526272,0.557737)(0.532156,0.569364)(0.537976,0.580482)(0.543726,0.591118)(0.549401,0.6013)(0.554997,0.611054)(0.56051,0.620403)(0.565938,0.62937)(0.571279,0.637975)(0.576532,0.646238)(0.581696,0.654177)(0.586771,0.661808)(0.591756,0.669149)(0.596651,0.676213)(0.601458,0.683015)(0.606177,0.689568)(0.610808,0.695885)(0.615353,0.701976)(0.619812,0.707853)(0.624188,0.713527)(0.628482,0.719006)(0.632694,0.7243)(0.636826,0.729417)(0.64088,0.734366)(0.644857,0.739154)(0.648758,0.743789)(0.652586,0.748276)(0.656342,0.752623)};
\draw[line width=0.25mm]
plot [smooth] coordinates{(1.,0.)(0.968409,0.000999001)(0.937006,0.00398406)(0.905978,0.00891972)(0.875501,0.015748)(0.845743,0.0243902)(0.816857,0.034749)(0.788981,0.0467112)(0.762235,0.0601504)(0.736721,0.0749306)(0.71252,0.0909091)(0.689696,0.107939)(0.668293,0.125874)(0.648335,0.144568)(0.629834,0.16388)(0.612782,0.183673)(0.597162,0.203822)(0.582942,0.224205)(0.570083,0.244713)(0.558536,0.265246)(0.548246,0.285714)(0.539155,0.306037)(0.531199,0.326146)(0.524314,0.345978)(0.518435,0.365482)(0.513496,0.384615)(0.509432,0.403341)(0.50618,0.421631)(0.503678,0.439462)(0.501868,0.456817)(0.500693,0.473684)(0.500099,0.490056)(0.500035,0.505929)(0.500454,0.521302)(0.501311,0.536178)(0.502563,0.550562)(0.504172,0.56446)(0.506103,0.577881)(0.50832,0.590835)(0.510794,0.603332)(0.513496,0.615385)(0.516399,0.627005)(0.51948,0.638205)(0.522717,0.649)(0.526089,0.659401)(0.529579,0.669421)(0.533168,0.679076)(0.536843,0.688376)(0.540589,0.697337)(0.544394,0.705969)(0.548246,0.714286)(0.552135,0.722299)(0.556052,0.730022)(0.559988,0.737464)(0.563935,0.744637)(0.567888,0.751553)(0.571839,0.758221)(0.575783,0.764651)(0.579716,0.770852)(0.583632,0.776836)(0.587529,0.782609)(0.591402,0.78818)(0.595249,0.793559)(0.599067,0.798752)};
\draw[line width=0.25mm]
plot [smooth] coordinates{(0.960304,0.278954)(0.944255,0.274667)(0.927799,0.271035)(0.910975,0.268096)(0.893822,0.265888)(0.876386,0.264445)(0.858717,0.263798)(0.840866,0.263974)(0.822892,0.264994)(0.804851,0.266875)(0.786807,0.269629)(0.768821,0.273262)(0.750958,0.277772)(0.733282,0.283154)(0.715855,0.289395)(0.69874,0.296476)(0.681998,0.304372)(0.665685,0.313052)(0.649856,0.322481)(0.63456,0.332618)(0.619842,0.343419)(0.605745,0.354835)(0.592302,0.366814)(0.579544,0.379304)(0.567494,0.392248)(0.556173,0.405591)(0.545593,0.419275)(0.535761,0.433246)(0.526681,0.447447)(0.51835,0.461824)(0.510762,0.476327)(0.503906,0.490904)(0.497768,0.50551)(0.492329,0.5201)(0.48757,0.534632)(0.483467,0.549069)(0.479995,0.563377)(0.477128,0.577523)(0.474838,0.59148)(0.473097,0.605223)(0.471875,0.618729)(0.471143,0.631981)(0.470873,0.644961)(0.471034,0.657656)(0.471599,0.670055)(0.472539,0.68215)(0.473828,0.693934)(0.475439,0.705402)(0.477346,0.716552)(0.479527,0.727382)(0.481956,0.737892)(0.484614,0.748083)(0.487477,0.757958)(0.490528,0.767521)(0.493746,0.776775)(0.497116,0.785726)(0.500619,0.794379)(0.504242,0.802741)(0.507968,0.810817)(0.511786,0.818614)(0.515683,0.82614)(0.519646,0.833402)(0.523666,0.840408)(0.527733,0.847164)};
\draw[line width=0.25mm]
plot [smooth] coordinates{(0.902369,0.430964)(0.892689,0.426519)(0.882671,0.422283)(0.872316,0.418279)(0.861624,0.41453)(0.850598,0.41106)(0.839242,0.407895)(0.827564,0.40506)(0.81557,0.40258)(0.803272,0.400483)(0.790683,0.398795)(0.777817,0.397543)(0.764692,0.396752)(0.751329,0.396449)(0.737751,0.396659)(0.723982,0.397405)(0.710052,0.398711)(0.69599,0.400596)(0.68183,0.403081)(0.667608,0.406181)(0.65336,0.409911)(0.639127,0.414281)(0.62495,0.4193)(0.610872,0.424972)(0.596936,0.431297)(0.583186,0.438274)(0.569667,0.445895)(0.556424,0.454148)(0.5435,0.46302)(0.530937,0.472491)(0.518777,0.482538)(0.507058,0.493135)(0.495818,0.504253)(0.485091,0.515857)(0.474906,0.527912)(0.465291,0.54038)(0.45627,0.553219)(0.447863,0.566388)(0.440086,0.579843)(0.432951,0.59354)(0.426465,0.607435)(0.420632,0.621483)(0.415453,0.635639)(0.410924,0.649862)(0.407038,0.66411)(0.403785,0.678342)(0.40115,0.692521)(0.399119,0.70661)(0.397673,0.720576)(0.396792,0.734387)(0.396452,0.748014)(0.396631,0.761432)(0.397305,0.774617)(0.398446,0.787548)(0.40003,0.800207)(0.402029,0.812578)(0.404417,0.824647)(0.407167,0.836404)(0.410254,0.84784)(0.413651,0.858947)(0.417334,0.869721)(0.421278,0.88016)(0.425459,0.89026)(0.429855,0.900023)};
\draw[line width=0.25mm]
plot [smooth] coordinates{(0.848786,0.528737)(0.84209,0.524661)(0.835146,0.520628)(0.827948,0.51665)(0.820487,0.512736)(0.812757,0.508898)(0.80475,0.505148)(0.796459,0.501498)(0.787878,0.497964)(0.779001,0.494561)(0.769822,0.491303)(0.760335,0.48821)(0.750537,0.485298)(0.740424,0.482589)(0.729992,0.480101)(0.719241,0.477857)(0.70817,0.47588)(0.69678,0.474194)(0.685073,0.472824)(0.673054,0.471796)(0.660728,0.471137)(0.648105,0.470874)(0.635194,0.471035)(0.622007,0.471651)(0.608562,0.47275)(0.594875,0.474361)(0.580969,0.476513)(0.566867,0.479236)(0.552596,0.482557)(0.538187,0.486502)(0.523675,0.491098)(0.509096,0.496368)(0.49449,0.502333)(0.479901,0.509011)(0.465376,0.516418)(0.450963,0.524566)(0.436714,0.533462)(0.422682,0.543109)(0.408922,0.553506)(0.39549,0.564646)(0.382442,0.576518)(0.369836,0.589105)(0.357726,0.602382)(0.346167,0.616322)(0.335211,0.63089)(0.324906,0.646048)(0.315299,0.661751)(0.306432,0.67795)(0.29834,0.694593)(0.291057,0.711621)(0.284608,0.728977)(0.279013,0.746598)(0.274288,0.764421)(0.270439,0.782383)(0.26747,0.800419)(0.265375,0.818466)(0.264145,0.836462)(0.263764,0.854348)(0.264211,0.872067)(0.265461,0.889565)(0.267484,0.906791)(0.270247,0.9237)(0.273713,0.94025)(0.277843,0.956403)};
\end{tikzpicture}
}
\end{figure}

\section{Inner distances}

What makes the Poincar\'{e} distance exceptional? We could, for example, introduce $\mu(p,q):=|\varphi_{q}(p)|$, which is a distance function on $\DD$ with all the properties as $\rho$ in the Schwarz-Pick lemma. This distance function is called \emph{M\"{o}bius distance}. But there is a crucial difference between those distances: in the Poincar\'{e} distance the boundary is infinitely far away from every point and M\"{o}bius distance seemingly does not have that property. On Figure \ref{fig:disc1} six concentric discs with a center $0$ can be observed. These are balls in the Poincar\'{e} distance with radii $0.5,1,1.5,2,2.5,3$. With the increasing of radii, circles are dense in the neighborhood of the boundary of a disc. This is even more evident if we choose the center of balls near the boundary, as in Figure \ref{fig:disc2}. It can be observed that closed balls in the Poincar\'{e} metric are compact. From the explicit expression for $\rho$ we can prove that every Cauchy sequence with respect to $\rho$ is convergent in $\DD$. We say that $(\DD,\rho)$ is \emph{a complete metric space}. Is there a connection among the infiniteness of a boundary, compactness of closed balls and completeness of a metric space? This question is dealt with in this section.

Let $\gamma\colon[0,1]\rightarrow\Omega$ be a piecewise $\Cc^n$-path from $x$ to $y$ and $\delta:=\{0=t_0<t_1<\cdots<t_k=1\}$ partition of $[0,1]$ on $k$ pieces. \np{Length} of $\gamma$ in space $\left(\Omega,d_\Omega\right)$ is defined by
\[
      L_{d_\Omega}(\gamma) := \sup_\delta \sum_{n=1}^k d_\Omega(\gamma(t_{n-1}),\gamma(t_n)).
\]
We call $d_\Omega^{\;i}(x,y):=\inf L_{d_\Omega}(\gamma)$, where the infimum goes through all piecewise $\Cc^n$-paths $\gamma$ from $x$ to $y$, \np{inner pseudodistance}. It is not difficult to prove that this is indeed a pseudodistance. Because it is always $d_\Omega(x,y)\leq L_{d_\Omega}(\gamma)$, it follows $d_\Omega(x,y)\leq d_\Omega^{\;i}(x,y)$. If the opposite inequality is valid, then we call it $d_\Omega$ \np{inner}. In that case we have $d_\Omega=d_\Omega^{\;i}$.

Let us return to the pseudodistance $d_\Omega(p,q)$, generating with \eqref{hermet}. Let $\gamma\colon[0,1]\rightarrow\Omega$ be a piecewise $\Cc^1$-path from $p$ to $q$. Because we have
\[
      \sum_{n=1}^k d_\Omega(\gamma(t_{n-1}),\gamma(t_n)) \leq \sum_{n=1}^k \int_{t_{n-1}}^{t_n} \sqrt{2(\lambda\circ\gamma)}|\dot{\gamma}|\dif{t} =
      L_{ds_\Omega^2}(\gamma)
\]
for every partition $\delta$, it follows $L_{d_\Omega}(\gamma)\leq L_{ds_\Omega^2}(\gamma)$. Therefore $d_\Omega^{\;i}(p,q)\leq d_\Omega(p,q)$. We have shown that $d_\Omega$ is inner.

We denote the ball with the center $x\in\Omega$ and its radius $r>0$ with $B_{d_\Omega}(x,r)$. The closed ball will be $\overline{B}_{d_\Omega}(x,r)$.

\begin{proposition}
Assume that $d_\Omega$ is a continuous inner distance. Then $d_\Omega$ is equivalent to the Euclidean topology on $\Omega$.
\end{proposition}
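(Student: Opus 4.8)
The plan is to show that the two topologies share the same open sets by proving, at each point $x_0\in\Omega$, that the $d_\Omega$-balls and the Euclidean discs refine one another as neighbourhood bases. Concretely I would establish: (A) for every $\delta>0$ some Euclidean disc $\Dr{\epsilon}(x_0)$ is contained in $B_{d_\Omega}(x_0,\delta)$, and (B) for every $\epsilon>0$ some $d_\Omega$-ball $B_{d_\Omega}(x_0,\delta)$ is contained in $\Dr{\epsilon}(x_0)$. Direction (A) says every $d_\Omega$-open set is Euclidean-open, and (B) gives the reverse inclusion; together they yield the claimed equivalence.

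Direction (A) is immediate from the continuity hypothesis. First I would fix $x_0$ and note that $y\mapsto d_\Omega(x_0,y)$ is continuous and vanishes at $y=x_0$; hence $B_{d_\Omega}(x_0,\delta)=\{y\colon d_\Omega(x_0,y)<\delta\}$ is a Euclidean-open neighbourhood of $x_0$ and therefore contains a Euclidean disc $\Dr{\epsilon}(x_0)$.

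Direction (B) is the heart of the matter, and it is where I would use both the innerness of $d_\Omega$ and the fact that it is a genuine distance. Fix $\epsilon>0$ small enough that $\overline{\Dr{\epsilon}(x_0)}\subseteq\Omega$, which is possible since $\Omega$ is open, and consider the Euclidean circle $S:=\{w\in\CC\colon|w-x_0|=\epsilon\}$. Since $S$ is compact and contained in $\Omega$, while $w\mapsto d_\Omega(x_0,w)$ is continuous and strictly positive on $S$ (strict positivity because $d_\Omega$ separates points and $w\neq x_0$ on $S$), it attains a positive minimum $\delta:=\min_{w\in S}d_\Omega(x_0,w)>0$. I would then observe that any piecewise $\Cc^1$-path $\gamma$ from $x_0$ to a point $y$ with $|y-x_0|\geq\epsilon$ must cross $S$: the continuous map $t\mapsto|\gamma(t)-x_0|$ runs from $0$ to a value $\geq\epsilon$, so by the intermediate value theorem $\gamma(t^\ast)=:w\in S$ for some $t^\ast$. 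Using the partition $\{0,t^\ast,1\}$ in the definition of $L_{d_\Omega}(\gamma)$ gives $L_{d_\Omega}(\gamma)\geq d_\Omega(x_0,w)\geq\delta$, and taking the infimum over all such paths yields $d_\Omega^{\;i}(x_0,y)\geq\delta$. Since $d_\Omega$ is inner, $d_\Omega(x_0,y)=d_\Omega^{\;i}(x_0,y)\geq\delta$. Contrapositively, $d_\Omega(x_0,y)<\delta$ forces $|y-x_0|<\epsilon$, i.e. $B_{d_\Omega}(x_0,\delta)\subseteq\Dr{\epsilon}(x_0)$, as required.

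The main obstacle is precisely this lower bound: one must rule out the possibility that points lying far away in the Euclidean sense, in particular points escaping towards $\partial\Omega$, are arbitrarily $d_\Omega$-close to $x_0$. The crossing argument handles this uniformly, since every competing path is forced through the compact circle $S$, on which the distance to $x_0$ is bounded below by the positive constant $\delta$; it is exactly here that innerness (to replace $d_\Omega$ by an infimum of path-lengths) and the separation property of the distance (to guarantee $\delta>0$) are indispensable.
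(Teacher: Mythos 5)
Your proof is correct and follows essentially the same route as the paper: continuity of $d_\Omega(x_0,\cdot)$ makes the metric balls Euclidean-open, and for the reverse inclusion you pick a relatively compact Euclidean neighbourhood, take the (positive, by compactness of its boundary) distance from $x_0$ to that boundary, and use innerness together with the fact that any path leaving the neighbourhood must cross the boundary and hence has $L_{d_\Omega}$-length at least that distance. The only difference is cosmetic: you phrase the crossing argument contrapositively (bounding $d_\Omega^{\;i}(x_0,y)$ from below for points outside the disc), whereas the paper argues directly that a short path cannot escape the relatively compact set $U'$.
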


\begin{proof}
Choose an arbitrary $x\in\Omega$. Assume $d_\Omega\colon\{x\}\times\Omega\rightarrow[0,\infty)$ is a continuous function. The set $[0,r)\subset[0,\infty)$ is open. Because $B_{d_\Omega}(x,r)=\left(\pr_2\circ d_\Omega^{-1}\right)\left([0,r)\right)$ where $\pr_2$ is a projection to the second component, every $d_\Omega$-ball is open in the Euclidean topology.

Conversely, we will prove that every open set in $\Omega$ is open on $d_\Omega$. Let $U\subset\Omega$ be an arbitrary neighborhood of a point $x\in\Omega$. We must show that $r>0$ exists such that $B_{d_\Omega}(x,r)\subset U$. Choose a relatively compact neighborhood $U'\subset U$ of a point $x$. Define $r:=d_\Omega(x,\partial U')=\inf_{y\in\partial{U'}}d_\Omega(x,y)$. Because $d_\Omega$ is an inner distance, for every point $y\in B_{d_\Omega}(x,r)$ exists a piecewise $\Cc^n$-path $\gamma$ from $x$ to $y$ such that $L_{d_\Omega}(\gamma)<r$. This means that $\gamma\subset B_{d_\Omega}(x,r)$. Hence $B_{d_\Omega}(x,r)\subset U'$, because contrary, for $y\in U\setminus\overline{U'}$ there will be $x'\in\partial U'$ such that $r+d_\Omega(x',y)\leq r$. As this is impossible, the proposition is thus proved.
\end{proof}

Remember that a complete metric space $(X,d_\Omega)$ means that every Cauchy sequence converges in $d_\Omega$. If there is a continuous inner distance, then compactness of closed balls characterizes completeness of a metric space.

\begin{theorem}[Hopf-Rinow]
\label{izr:HR}
Assume that $d_\Omega$ is a continuous inner distance. Then $(\Omega,d_\Omega)$ is a complete metric space if and only if every closed ball $\overline{B}_{d_\Omega}(x,r)$ is compact.
\end{theorem}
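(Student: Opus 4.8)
**The plan is to prove the two implications separately, with the forward direction (compact balls $\Rightarrow$ complete) being the routine one and the reverse (complete $\Rightarrow$ compact balls) being the main obstacle.**

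For the easy direction, suppose every closed ball is compact. Let $(x_n)$ be a Cauchy sequence in $(\Omega, d_\Omega)$. A Cauchy sequence is bounded, so there is some $R>0$ with $x_n \in \overline{B}_{d_\Omega}(x_1, R)$ for all $n$. Since this closed ball is compact, the sequence has a convergent subsequence; a Cauchy sequence with a convergent subsequence converges to the same limit, and the limit lies in the (closed) ball, hence in $\Omega$. This shows $(\Omega, d_\Omega)$ is complete.

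The converse is the hard part. I would argue by contradiction: suppose $(\Omega, d_\Omega)$ is complete but some closed ball $\overline{B}_{d_\Omega}(x_0, r)$ fails to be compact. Define
\[
\rho^\ast := \sup\{\, \varrho \geq 0 : \overline{B}_{d_\Omega}(x_0, \varrho) \text{ is compact} \,\}.
\]
Here I expect to use the continuity of $d_\Omega$ together with the inner property in an essential way. The strategy is to show first that $\rho^\ast$ is attained, i.e.\ $\overline{B}_{d_\Omega}(x_0, \rho^\ast)$ is itself compact. This requires a covering argument: cover $\overline{B}_{d_\Omega}(x_0, \rho^\ast)$ by the compact balls $\overline{B}_{d_\Omega}(x_0, \rho^\ast - 1/n)$ together with a thin boundary shell, and use the inner-distance property to show that points at distance exactly $\rho^\ast$ are limits of points at strictly smaller distance along near-geodesic paths, so that a Cauchy/sequential-compactness argument (invoking completeness) produces limit points inside $\Omega$.

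The crux is then to derive a contradiction by showing one can push compactness \emph{past} $\rho^\ast$. Using that $d_\Omega$ is inner, every point $y$ with $d_\Omega(x_0,y)$ slightly larger than $\rho^\ast$ is connected to a point $y'$ on the sphere $\{d_\Omega(x_0,\cdot)=\rho^\ast\}$ by a short path, so $y \in \overline{B}_{d_\Omega}(y', \varepsilon)$ for small $\varepsilon$. Covering the compact sphere by finitely many such balls $\overline{B}_{d_\Omega}(y_i, \varepsilon_i)$ and proving each of these small balls is compact (again via completeness plus the local Euclidean-equivalence from the preceding proposition, which ensures small metric balls are relatively compact in $\Omega$) would show $\overline{B}_{d_\Omega}(x_0, \rho^\ast + \delta)$ is covered by finitely many compact sets, hence compact, contradicting the maximality of $\rho^\ast$. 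The main obstacle I anticipate is establishing that \emph{small} closed balls are compact: this is where completeness must be combined with the fact that $d_\Omega$ is inner and continuous, so that a small $d_\Omega$-ball sits inside a relatively compact Euclidean neighborhood and its closure cannot escape $\Omega$. Handling the interaction between the metric topology and the ambient Euclidean topology carefully is the delicate point.
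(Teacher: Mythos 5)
Your outline is correct and every step can be filled in, but the hard direction follows a genuinely different route from the paper's. The two proofs agree up to a point: the same easy direction, the same contradiction setup with the critical radius $\sup\{r\colon\overline{B}_{d_\Omega}(x_0,r)\ \textrm{is compact}\}$ (your $\rho^\ast$, the paper's $r_0$), and the same key use of innerness (a finite $\varepsilon$-net of the compact ball $\overline{B}_{d_\Omega}(x_0,r_0-\varepsilon)$ becomes, by following near-optimal paths across the intermediate sphere, a $2\varepsilon$-net for balls of radius up to and slightly beyond $r_0$). You then run the classical Hopf--Rinow--Cohn-Vossen endgame: $\overline{B}_{d_\Omega}(x_0,\rho^\ast)$ is totally bounded and closed, hence compact by completeness, and covering its boundary sphere by finitely many compact small balls extends compactness to radius $\rho^\ast+\delta$, contradicting maximality. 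The paper never proves compactness at the critical radius: instead it uses the covering to locate a net point $y_{n_1}$ around which closed balls of radius comparable to $r_0/2$ already fail to be compact, iterates to obtain centers $y_{n_k}$ of non-compact balls of radius $r_02^{-k}$ with $d_\Omega(y_{n_{k-1}},y_{n_k})\leq r_02^{1-k}$, and observes that this Cauchy sequence cannot converge (any limit point would have a compact small ball swallowing the non-compact ones), contradicting completeness. Your route invokes completeness exactly once (totally bounded plus closed implies compact) and is the standard, structurally cleaner argument; the paper's recursion instead manufactures an explicit divergent Cauchy sequence, which makes the role of completeness vivid but leaves more to the reader, notably the justification of non-convergence. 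One correction to your final paragraph: compactness of \emph{small} closed balls requires no completeness at all; it follows from the preceding proposition (the $d_\Omega$-topology equals the Euclidean one) together with local compactness of $\CC$, since a small closed $d_\Omega$-ball is $d_\Omega$-closed and sits inside a relatively compact Euclidean neighborhood whose closure stays in $\Omega$. Both proofs rely on exactly this fact --- yours to push past $\rho^\ast$, the paper's to rule out convergence of $\{y_{n_k}\}$.
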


\begin{proof}
The easy part of the proof is an implication from compactness of closed balls to  completeness of a space and is valid without the assumption of innerness. Let every closed $d_\Omega$-ball be compact. Because in a metric space every Cauchy sequence has one accumulation point at most and in a compact space every sequence has one accumulation point at least, it follows that $(\Omega,d_\Omega)$ is complete.

Let $(\Omega,d_\Omega)$ be a complete space. Fix $x_0\in\Omega$. Then $r>0$ exists such that $B_{d_\Omega}(x_0,r)$ is relatively compact. If we prove that this is true for all $r>0$, our goal has  been accomplished. Assuming the contrary, set
\[
      r_0 := \sup\left\{r\colon\overline{B}_{d_\Omega}(x_0,r)\;\textrm{is compact}\right\}.
\]
Then the set $\overline{B}_{d_\Omega}(x_0,r_0-\varepsilon)$ is compact for all $\varepsilon>0$. Therefore a sequence $\{y_i\}_{i=1}^n \subset \overline{B}_{d_\Omega}(x_0,r_0-\varepsilon)$ exists such that
\[
      \overline{B}_{d_\Omega}(x_0,r_0-\varepsilon)\subset \bigcup_{i=1}^n B_{d_\Omega}(y_i,\varepsilon).
\]
We will demonstrate that $\{B_{d_\Omega}(y_i,2\varepsilon)\}_{i=1}^n$ is an open cover of $B_{d_\Omega}(x_0,r_0)$. Let us take arbitrary $x\in B_{d_\Omega}(x_0,r_0)\setminus\overline{B}_{d_\Omega}(x_0,r_0-\varepsilon)$. By innerness a piecewise $\Cc^n$-path $\gamma$ exists from $x_0$ to $x$ such that $L_{d_\Omega}(\gamma)<r_0$. Then $t_0\in(0,1)$ and $y_j\in \{y_i\}_{i=1}^n$ exist such that $\gamma(t_0)\in \partial B_{d_\Omega}(x_0,r_0-\varepsilon)$ and $\gamma(t_0)\in B_{d_\Omega}(y_j,\varepsilon)$. Then we have
\begin{flalign*}
L_{d_\Omega}\left(\gamma|_{[t_0,1]}\right) &= L_{d_\Omega}(\gamma)-L_{d_\Omega}\left(\gamma|_{[0,t_0]}\right) \\
                     &< r_0 - (r_0-\varepsilon) = \varepsilon.
\end{flalign*}
This means that $d_\Omega(\gamma(t_0),x)<\varepsilon$ and $d_\Omega(x,y_j)<2\varepsilon$. It follows
\[
    B_{d_\Omega}(x_0,r_0) \subset \bigcup_{i=1}^n B_{d_\Omega}(y_i,2\varepsilon).
\]
Then $n_1\in\NN$, $1\leq n_1 \leq n$ exists such that $\overline{B}_{d_\Omega}(y_{n_1},r_0/2)$ is not compact (we take $\varepsilon = r_0/2$). Set $r_1 := \sup\{r\colon\overline{B}_{d_\Omega}(y_{n_1},r)\;\textrm{is compact}\}$. We inductively continue this process as above. This is how a sequence of points $y_{n_k}\in B_{d_\Omega}(y_{n_{k-1}},r_02^{1-k})$ is obtained, where $\overline{B}_{d_\Omega}(y_{n_k},r_02^{-k})$ is not compact for every $k\in\NN$. The nonconvergent sequence $\{y_{n_k}\}$ is Cauchy, which is in contradiction with the assumption of the completeness of domain $\Omega$. The theorem is therefore proved.
\end{proof}

Let there be a domain $\Omega\subseteq\CC$ and let us choose an arbitrary point $x\in\Omega$. The mapping $\gamma\colon[0,1)\rightarrow\Omega$ is a piecewise $\Cc^n$-path from $x$ to $y\in\partial\Omega\cup\{\infty\}$ if for every $t_0\in(0,1)$ mapping $\gamma|_{[0,t_0]}$ is a $\Cc^n$-path, $\gamma(0)=x$ and $\lim_{t\to1}\gamma(t)=y$. Domain $\Omega$ is \np{b-complete} with respect to the distance $d_\Omega$ if for arbitrary points $x\in\Omega$, $y\in\partial\Omega\cup\{\infty\}$ and for an arbitrary piecewise $\Cc^n$-path $\gamma$ from $x$ to $y$, it follows $\lim_{t\to1}L_{d_\Omega}\left(\gamma|_{[0,t]}\right)=\infty$. Intuitively speaking, $(\Omega,d_\Omega)$ is b-complete if and only if the boundary is ``infinitely far away'' from every inner point.

\begin{corollary}
\label{cor:complete}
Assume that $d_\Omega$ is a continuous inner distance. Then $(\Omega,d_\Omega)$ is a complete metric space if and only if $(\Omega,d_\Omega)$ is b-complete.
\end{corollary}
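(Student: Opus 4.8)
The plan is to deduce both implications from the Hopf--Rinow theorem (Theorem \ref{izr:HR}), which already characterises completeness of $(\Omega,d_\Omega)$ by compactness of all closed balls, and to pass between $d_\Omega$-convergence and ordinary (Euclidean) convergence using the preceding proposition, that a continuous inner distance induces the Euclidean topology on $\Omega$. Throughout I would view $\Omega$ inside the Riemann sphere $\CC\cup\{\infty\}$, which is compact; thus every sequence in $\Omega$ has a subsequence converging in the Euclidean sense to a point of $\overline{\Omega}\cup\{\infty\}$, and by the topology proposition that limit lies in $\Omega$ if and only if the subsequence also converges in $d_\Omega$.

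First I would prove that completeness implies b-completeness. Suppose $(\Omega,d_\Omega)$ is complete but, for a contradiction, not b-complete: there are $x\in\Omega$, $y\in\partial\Omega\cup\{\infty\}$ and a path $\gamma\colon[0,1)\to\Omega$ from $x$ to $y$ with $\lim_{t\to1}L_{d_\Omega}(\gamma|_{[0,t]})=:L<\infty$. Choosing $t_n\nearrow1$ and using additivity of length along $\gamma$ together with $d_\Omega(\gamma(s),\gamma(t))\le L_{d_\Omega}(\gamma|_{[s,t]})$, the points $\gamma(t_n)$ form a $d_\Omega$-Cauchy sequence, hence converge in $d_\Omega$ to some $z\in\Omega$. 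By the topology proposition this is Euclidean convergence $\gamma(t_n)\to z\in\Omega$; but $\gamma(t_n)\to y\in\partial\Omega\cup\{\infty\}$ in the Euclidean sense too, forcing $z=y\notin\Omega$, a contradiction.

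For the converse I would argue by contraposition. Assuming $(\Omega,d_\Omega)$ is not complete, take a $d_\Omega$-Cauchy sequence $\{x_n\}$ with no limit in $\Omega$. Passing to a subsequence I arrange simultaneously $\sum_n d_\Omega(x_n,x_{n+1})<\infty$ (possible since the sequence is Cauchy) and $x_n\to y$ in the Euclidean sense for some $y\in\CC\cup\{\infty\}$ (possible by compactness of the sphere); this $y$ cannot lie in $\Omega$, for then the subsequence, being Cauchy with a convergent subsequence, would converge in $d_\Omega$, contradicting non-convergence, so $y\in\partial\Omega\cup\{\infty\}$. By innerness of $d_\Omega$, for each $n$ I choose a piecewise $\Cc^n$-path $\sigma_n$ from $x_n$ to $x_{n+1}$ with $L_{d_\Omega}(\sigma_n)<d_\Omega(x_n,x_{n+1})+2^{-n}$ and concatenate them, reparametrising the junction times to $t_n\nearrow1$, into a single path $\gamma\colon[0,1)\to\Omega$ with $\gamma(0)=x_1$. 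Its total length $\sum_n L_{d_\Omega}(\sigma_n)$ is finite, so $\lim_{t\to1}L_{d_\Omega}(\gamma|_{[0,t]})<\infty$, precisely the finiteness b-completeness forbids.

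The hard part will be the last verification: that $\gamma$ is honestly a path to a single boundary point, i.e.\ that $\lim_{t\to1}\gamma(t)$ exists and equals $y$. Finiteness of the tail length shows $\gamma(t)$ is $d_\Omega$-Cauchy as $t\to1$, and by the topology proposition any Euclidean accumulation point lying in $\Omega$ would be a $d_\Omega$-limit of the whole tail, hence of $\{x_n\}$, contradicting non-convergence; so all accumulation points lie on $\partial\Omega\cup\{\infty\}$, one of them being $y$. Upgrading ``accumulation'' to a genuine limit is the delicate point, since a $d_\Omega$-short arc need not be Euclidean-small when $\lambda$ degenerates near the boundary; this is handled by refining the subsequence so that $x_n\to y$ fast and selecting each $\sigma_n$ inside a shrinking $d_\Omega$-ball about $x_n$, then invoking the topology equivalence to bound its Euclidean diameter. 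Once $\lim_{t\to1}\gamma(t)=y$ is secured, $\gamma$ exhibits the failure of b-completeness. Alternatively the whole converse can be phrased through Hopf--Rinow, replacing ``non-convergent Cauchy sequence'' by ``non-compact closed ball'', with the finite-length path then living inside that ball.
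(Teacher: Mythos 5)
Your first implication (complete $\Rightarrow$ b-complete) is correct but runs differently from the paper's. The paper argues contrapositively through Theorem \ref{izr:HR}: the finite-length path to the boundary places a sequence with no accumulation point in $\Omega$ inside a closed ball $\overline{B}_{d_\Omega}(x,r)$, so that ball is not compact and completeness fails. You instead use completeness directly: $\gamma(t_n)$ is $d_\Omega$-Cauchy because the length function is monotone, additive and bounded, hence it $d_\Omega$-converges to some $z\in\Omega$, contradicting Euclidean convergence to $y\notin\Omega$ via the topology proposition. Your route is more elementary, since it bypasses Hopf--Rinow entirely; the paper's recycles the theorem it has just proved. Both are sound.

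In the second implication your construction is exactly the paper's: a subsequence with summable consecutive distances, near-minimizing connecting paths supplied by innerness, concatenation of finite total length. The ``delicate point'' you flag --- that the concatenation $\gamma$ must satisfy $\lim_{t\to1}\gamma(t)=y$ for a single $y\in\partial\Omega\cup\{\infty\}$, as the definition of b-completeness demands --- is genuinely the crux, and you should know that the paper does not address it at all: it simply declares $\gamma$ to be a path ``from $x_{k_1}$ to $x$''. However, your proposed repair does not close it. Any near-minimizing $\sigma_n$ automatically lies in $B_{d_\Omega}\bigl(x_n,\,d_\Omega(x_n,x_{n+1})+2^{-n}\bigr)$, so ``selecting $\sigma_n$ inside shrinking $d_\Omega$-balls'' adds nothing; and the topology proposition cannot convert this into Euclidean smallness, because equivalence of topologies is a pointwise statement about neighbourhood bases and carries no uniformity as the centers $x_n$ approach $\partial\Omega$. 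Concretely, put on $\DD$ a continuous density that is extremely small along a channel spiralling out to the whole circle $\partial\DD$ and of order $(\text{gap width})^{-2}$ in the regions between consecutive windings: consecutive channel points one winding apart are then $d_\Omega$-close, but every nearly minimizing path joining them winds once around the disc and has Euclidean diameter of order $2$, so the concatenated path spirals forever and converges to no boundary point; meanwhile any path that does converge to a single point of $\partial\DD$ must cross infinitely many expensive gap annuli and has infinite length. So such a space is b-complete in the paper's sense yet not complete, which shows the missing limit claim cannot be rescued by a cleverer choice of the $\sigma_n$; the defect sits in the formulation of b-completeness via paths converging to a boundary point. The clean repair is to require infinite length for all divergent paths (paths eventually leaving every compact subset of $\Omega$): with that definition your own observation --- tail lengths tend to zero, so any compact set visited by the tail of $\gamma$ would produce a $d_\Omega$-limit of the Cauchy sequence --- finishes the converse, and the first implication survives verbatim.
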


\begin{proof}
Assume that $\Omega$ is not b-complete. Then there exists a piecewise $\Cc^n$-path $\gamma\colon[0,1)\rightarrow\Omega$ from $x\in\Omega$ to $y\in\partial\Omega$ such that $\lim_{t\to1}L_{d_\Omega}(\gamma|_{[0,t]})=r$ for some $r>0$. For every sequence $\{y_n\}\subset\gamma([0,1))$, where $y_n\to y$, it follows that $d_\Omega(x,y_n)\leq r$ for every $n\in\NN$. Because the closed ball $\overline{B}_{d_\Omega}(x,r)$ is not compact, Theorem \ref{izr:HR} guarantees that $(\Omega,d_\Omega)$ is not complete.

Assume that $\Omega$ is not a complete metric space. Then a Cauchy sequence $\{x_i\}_{i=1}^\infty\subset\Omega$ exists with the limit $x\in\partial\Omega$. Take arbitrary $\varepsilon\in(0,1)$. Since the sequence is Cauchy, then a subsequence $\{k_i\}\subset\NN$ exists such that $d_\Omega(x_{k_i},x_{k_{i+1}})<\varepsilon^i$. Because $d_\Omega$ is an inner distance, there exist piecewise $\Cc^n$-paths $\gamma_i$ with $\gamma_i(0)=x_{k_i}$ and $\gamma_i(1)=x_{k_{i+1}}$ such that $L_{d_\Omega}(\gamma_i)=d_\Omega(x_{k_i},x_{k_{i+1}})+\varepsilon^i<2\varepsilon^i$. Define a piecewise $\Cc^n$-path $\gamma\colon[0,1)\rightarrow\Omega$ from $x_{k_1}$ to $x$ with $\gamma(t):=\gamma_i(2^{i}(t-1)+2)$ for $t\in[1-2^{1-i},1-2^{-i}]$. Take arbitrary $t_0\in(0,1)$. Then $j\in\NN$ exists such that $t_0\in[1-2^{1-j},1-2^{-j}]$. Therefore
\[
     L_{d_\Omega}\left(\gamma|_{[0,t_0]}\right)<2(\varepsilon+\varepsilon^2+\cdots+\varepsilon^j)<\frac{2\varepsilon}{1-\varepsilon}
\]
for $j>1$. Since $\lim_{t\to1}L_{d_\Omega}\left(\gamma|_{[0,t]}\right)<\infty$, the domain $\Omega$ is not b-complete.
\end{proof}

\section{Ahlfors' generalization of the Schwarz-Pick lemma}

As mentioned in the introduction, Ahlfors' generalization was based on curvature. \np{Gauss curvature} $K_{ds_{\Omega}^2}$ of pseudometric \eqref{hermet} is defined by
\begin{equation}
\label{equ:gauss}
K_{ds_{\Omega}^2}(z) := -\frac{1}{\lambda}\frac{\partial^2}{\partial z \partial \bar{z}} \log\lambda(z)
\end{equation}
for $z\in\Omega\setminus Z(\lambda)$ and $-\infty$ for the rest of the points. A simple calculation shows $K_{d\rho^2} \equiv -1$ for Poincar\'{e} metric \eqref{ena:poi}. It is worth mentioning that this curvature is indeed connected to the Gauss curvature of the Riemann metric on surfaces in real differential geometry. A Hermitian metric $2\lambda(z)|\dif{z}|^2$ is a complex analogue of the Riemann metric $E(x,y)\dif{x}^2 + 2F(x,y)\dif{x}\dif{y} + G(x,y)\dif{y}^2$ in real world. Since $z=x+\ie y$, it is easy to accept that $F(x,y)=0$ and $E(x,y)=G(x,y)=2\lambda(x,y)$, so $ds^2=2\lambda(x,y)(\dif{x}^2+\dif{y}^2)$. Let there be $u\in\Cc^2(\Omega,\RR^+)$, where $\Omega\subset\CC$. Then
\begin{equation}
\label{ena:ukr}
\frac{\partial^2}{\partial z\partial\bar{z}} \log{u} = \frac{1}{4}\left(\frac{\partial^2}{\partial x^2} + \frac{\partial^2}{\partial y^2}\right)\log{u} = \frac{u(u_{xx}+u_{yy})-(u_x^2+u_y^2)}{4u^2}.
\end{equation} If the Gauss curvature of $ds^2$ is calculated (see e.g. \cite[Corollary 10.2.3]{Pres}), we get $K_{ds_{\Omega}^2}=(2\lambda)^{-2}K_{ds^2}$. The curvatures seem to be different, nevertheless, the sign does not change.

An important property of the Gauss curvature is invariance on the pullback, which explicitly means that for an arbitrary $f\in\hol{\Omega_1}{\Omega_2}$ there is
\[
     K_{ds_{\Omega_1}^2}(f(z)) = K_{ds_{\Omega_2}^2}(z)
\]
where $ds_{\Omega_2}^2$ is an arbitrary Hermitian pseudometric on $\Omega_2$ and $ds_{\Omega_1}^2:=f^\ast\left(ds_{\Omega_2}^2\right)$. This can be easily seen from \eqref{equ:gauss}, using the chain rule and $f_{\bar{z}}\equiv0$ since $f$ is holomorphic.

We wish to have weaker assumptions for the function $\lambda(z)$. Assume that $\lambda(z)$ is only continuous function. Then $ds_\Omega^2=2\lambda|\dif{z}|^2$ is a continuous Hermitian metric. A pseudometric $\dsp=2\lap(z)|\dif{z}|^2$ is \np{supporting pseudometric} for $ds^2$ at $z_0\in\Omega$ if there is a neighborhood $U\ni z_0$ in $\Omega$ such that $\lap\in\Cc^2(U,\RR_0^+)$ and $\lap|_U \leq \lambda|_U$ with equality at $z_0$. What seems particularly noteworthy is that we do not need a supporting pseudometric, defined on the whole domain $\Omega$. When a supporting pseudometric exists for a continuous pseudometric, this is defined as local existence, which can change from point to point.

\begin{theorem}[Ahlfors' lemma]
\label{izr:ahlfors}
Let $\Omega$ be a domain with a continuous Hermitian pseudometric $ds_{\Omega}^2$, for which a supporting pseudometric $\dsp$ exists. Assume that $K_{\dsp}|_\Omega\leq L$ for some $L<0$. Then for every $f\in\hol{\DD}{\Omega}$ we have
\begin{equation}
\label{IzrAhlEna}
f^\ast(ds_{\Omega}^2) \leq |L|^{-1} d\rho^2
\end{equation}
where $d\rho^2$ is the Poincar\'{e} metric \eqref{ena:poi}.
\end{theorem}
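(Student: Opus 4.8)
The Poincar\'{e} metric $d\rho^2$ on $\DD$ blows up at $\partial\DD$, so a direct comparison on all of $\DD$ is awkward; the plan is to fix a radius $r\in(0,1)$, establish the estimate $f^\ast(ds_\Omega^2)\leq|L|^{-1}d\rho_r^2$ on the subdisc $\Dr{r}$ relative to the Poincar\'{e} metric $d\rho_r^2$ of \eqref{ena:poi}, and then let $r\to1$. Writing $ds_\Omega^2=2\lambda|\dif{z}|^2$ and $d\rho_r^2=2\lambda_r|\dif{z}|^2$ with $\lambda_r(z)=2r^2/(r^2-|z|^2)^2$, I would consider on $\Dr{r}$ the quotient of densities
\[
u(z):=\frac{\lambda(f(z))\,|f'(z)|^2}{\lambda_r(z)}.
\]
Since $f(\overline{\Dr{r}})$ is a compact subset of $\Omega$, the numerator is bounded on $\Dr{r}$, while $\lambda_r(z)\to\infty$ as $|z|\to r$; hence $u$ is continuous, tends to $0$ at the boundary, and therefore attains a maximum at some interior point $z_0\in\Dr{r}$ (unless $u\equiv0$, in which case \eqref{IzrAhlEna} holds trivially).

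Assume then $u(z_0)>0$, so $f'(z_0)\neq0$ and $\lambda(f(z_0))>0$. Here the hypothesis that $\lambda$ is merely continuous becomes the main obstacle: $u$ need not be $\Cc^2$, so a second-derivative test cannot be applied to $u$ directly. This is exactly what the supporting pseudometric is for. Let $\dsp=2\lap|\dif{z}|^2$ be a supporting pseudometric for $ds_\Omega^2$ at the point $f(z_0)\in\Omega$, so that $\lap\leq\lambda$ near $f(z_0)$ with equality there, and set
\[
\widetilde{u}(z):=\frac{\lap(f(z))\,|f'(z)|^2}{\lambda_r(z)}.
\]
Then $\widetilde{u}\leq u$ near $z_0$ with $\widetilde{u}(z_0)=u(z_0)=\max u$, so $z_0$ is an interior local maximum of $\widetilde{u}$; and because $\lap$ is $\Cc^2$, $f$ is holomorphic, and $f'(z_0)\neq0$, the function $\log\widetilde{u}$ is $\Cc^2$ near $z_0$. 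The second-derivative test at this interior maximum then gives $\frac{\partial^2}{\partial z\partial\bar{z}}\log\widetilde{u}(z_0)\leq0$.

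It remains to evaluate this mixed derivative. Writing $\log\widetilde{u}=\log\bigl(\lap(f(z))|f'(z)|^2\bigr)-\log\lambda_r(z)$ and using that $\log|f'(z)|^2$ is harmonic near $z_0$ (as $f'(z_0)\neq0$), the invariance of Gauss curvature under pullback yields
\[
\frac{\partial^2}{\partial z\partial\bar{z}}\log\bigl(\lap(f(z))|f'(z)|^2\bigr)=-\lap(f(z))|f'(z)|^2\,K_{\dsp}(f(z))\geq\lap(f(z))|f'(z)|^2\,|L|,
\]
since $K_{\dsp}\leq L<0$; while $\frac{\partial^2}{\partial z\partial\bar{z}}\log\lambda_r=\lambda_r$ because $K_{d\rho_r^2}\equiv-1$. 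Combining these at $z_0$ gives $0\geq\lap(f(z_0))|f'(z_0)|^2|L|-\lambda_r(z_0)$, i.e.\ $\widetilde{u}(z_0)\leq|L|^{-1}$.

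Since $\lap(f(z_0))=\lambda(f(z_0))$, this reads $u(z_0)\leq|L|^{-1}$, and as $z_0$ is the global maximum, $u\leq|L|^{-1}$ throughout $\Dr{r}$, that is $f^\ast(ds_\Omega^2)\leq|L|^{-1}d\rho_r^2$. Finally, for each fixed $z\in\DD$ I would choose $r$ with $|z|<r<1$ and let $r\to1$; since $\lambda_r(z)\to2/(1-|z|^2)^2$, the asserted inequality \eqref{IzrAhlEna} follows. The delicate points are thus the use of the supporting metric to recover a $\Cc^2$ function (so that the curvature computation and maximum principle apply despite $\lambda$ being only continuous) and the boundary behaviour of $u$ that forces the maximum to lie in the interior.
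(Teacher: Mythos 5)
Your proof is correct and follows essentially the same route as the paper's: comparing $f^\ast(ds_\Omega^2)$ with the Poincar\'{e} metric of a subdisc $\Dr{r}$ via the ratio of densities, forcing an interior maximum by the boundary vanishing of that ratio, replacing the merely continuous density by the $\Cc^2$ supporting one to apply the second-derivative test together with curvature invariance, and finally letting $r\to1$. The only differences are cosmetic — the paper rescales by $|L|$ at the outset so the target inequality becomes $ds^2\leq d\rho^2$, whereas you carry $|L|$ through explicitly — and you are in fact slightly more careful on the degenerate case $u\equiv0$ and the positivity needed to take logarithms.
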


\begin{proof}
By assumptions there is a continuous Hermitian pseudometric $ds_\Omega^2=2\lambda|\dif{z}|^2$. Define
\[
     ds^2:=|L|f^\ast(ds_{\Omega}^2)=2|L|\lambda(f)|f'|^2|\dif{z}|^2.
\]
Then $ds^2$ is a continuous Hermitian pseudometric on $\DD$. Define $\lambda_1:=|L|\lambda(f)|f'|^2$. The equation \eqref{IzrAhlEna} is equivalent to $ds^2 \leq d\rho^2$.

For every $r\in\RR^+$ define $\mu_r(z) := 2r^2(r^2 - |z|^2)^{-2}$ on $\Dr{r}$. Hence $d\rho^2 = 2\mu_1(z)|\dif{z}|^2$. Define the function $u_r(z):=\lambda_1(z)\mu_r^{-1}(z)$. Hence $ds^2 = u_r d\rho_r^2$. If we show that $u_1\leq1$ on $\DD$, then $ds^2 \leq d\rho^2$.

Let us take arbitrary $r'\in(0,1)$. If we show that $u_{r'}(z)\leq1$ for every $z\in\Dr{r'}$, then $u_1\leq1$ on $\DD$, because with $r'\rightarrow 1$ and fixed $z_0\in\Dr{r'}$ it follows $u_{r'}(z_0)\rightarrow u_1(z_0)$. Since $\lambda_1$ is bounded on $\Dr{r'}$, from $|z|\rightarrow r'$ follows $u_{r'}(z)\rightarrow0$. Function $u_{r'}$ is continuous, hence $z_0\in\Dr{r'}$ exists such that $\max u_{r'}=u_{r'}(z_0)$.

Let there be a supporting pseudometric $\dsp$ for $ds_\Omega^2$ at $f(z_0)$. Then $\dspp:=|L|f^\ast(\dsp)$ is a supporting pseudometric for $ds^2$ at $z_0$, whose curvature is $-1$ at most. Then a neighborhood $U\ni z_0$ and $\lapp(z)\in\Cc^2(U,\RR_0^+)$ exist such that $\lapp|_U \leq \lambda_1|_U$ with equality in $z_0$. Define function
\[
     v_r(z):=\frac{\lapp(z)}{\mu_r^{-1}(z)}=\frac{\lapp(z)}{\lambda_1(z)}u_r(z).
\]
Hence $\max_{z\in U} v_{r'}=u_{r'}(z_0)$.

Although what follows is related to the theory of real functions, it is a crucial element of the proof. Let us have $u\in\Cc^2(\Omega,\RR^+)$, where $\Omega\subset\CC$ is a domain. Assume that a function $u$ reaches its maximum at $(x_0,y_0)\in\Omega$. Because this point is singular, it follows $u_x(x_0,y_0)=u_y(x_0,y_0)=0$. But the point is maximum, so $u_{xx}(x_0,y_0)\leq0$ and $u_{yy}(x_0,y_0)\leq0$. By equation \eqref{ena:ukr} we have
\[
     \frac{\partial^2\log{u}}{\partial z\partial\bar{z}} \Bigl|_{z=x_0+\ie y_0}\Bigr. \leq 0.
\]

Remember that the maximum of function $v_{r'}|_U$ is reached at point $z_0$. Hence
\begin{eqnarray*}
0 \geq \frac{\partial^2 \log{v_{r'}|_U}}{\partial z \partial \bar{z}}\Bigl|_{z_0}\Bigr. &=& \frac{\partial^2 \log{\lapp}}{\partial z \partial \bar{z}}\Bigl|_{z_0}\Bigr. - \frac{\partial^2 \log{\mu_{r'}}}{\partial z \partial \bar{z}}\Bigl|_{z_0}\Bigr. \\
&=& -\lapp(z_0)K_{\dspp}(z) - \mu_{r'}(z_0) \\ &=& \mu_{r'}(z_0) \left(-v_{r'}(z_0)K_{\dspp}(z) - 1\right) \\
&\geq& \mu_{r'}(z_0) (v_{r'}(z_0) - 1).
\end{eqnarray*}
We get $v_{r'}(z_0)\leq1$ and $u_{r'}(z_0)\leq1$. Since $z_0$ is the maximum of $u_{r'}$, it follows $u_{r'}(z)\leq1$ on $\Dr{r'}$.
\end{proof}

In the introduction we promised that Theorem \ref{izr:ahlfors} is original version of the Ahlfors lemma. However, Ahlfors proved his lemma for Riemann surfaces. These are one dimensional complex manifolds, so the proof is essentially the same as one above. Under originality we mean the concept of supporting pseudometric. Most authors prove Ahlfors' lemma without it, because for most applications twice-differentiable Hermitian pseudometrics would suffice.

Assume that $L=-1$ in the Ahlfors lemma. Then we have $f^\ast(ds_{\Omega}^2)\leq d\rho^2$. Therefore we can use inequality \eqref{ena:skrcitev.met} and get
\begin{equation}
\label{ena:pos.ahlfors}
d_\Omega(f(p),f(q))\leq\rho(p,q).
\end{equation}
In the case of domain $\left(\DD,d\rho^2\right)$, we get \eqref{ena:sch.pick} of the Schwarz-Pick lemma. Inequality \eqref{ena:sch.pick.1} is even more easily accessible; in \eqref{IzrAhlEna} a proper metrics is put. It is a theorem from 1962 by M.~Heins that in the case of equality in \eqref{IzrAhlEna} for one point only, it follows that there is equality on the whole domain $\Omega$. This can be considered as the generalization of part (b) of the Schwarz-Pick lemma. Interested reader can find simplified proof due to D.~Minda in \cite[Proposition 1.2.1]{JP}.

\section{Applications}

In this section we prove the theorems mentioned in the introduction. Firstly, we will prove the Bloch theorem and a familiar theorem due to Landau, which drops out the assumption about simple discs. These are also Ahlfors' examples of the applications of his lemma. Next, a complete Hermitian metric is constructed, i.e.~an induced distance generates a complete metric, on domain $\CC\setminus\{0,1\}$, which satisfies the assumptions of the Ahlfors lemma. From that point, we are able to provide a proof of the Little Picard theorem. We use the nature of completeness of a space in studying the size of an image of a disc under a holomorphic mapping, which misses two distinct points. This result, named after Schottky is crucial for proving the Big Picard theorem.

\subsection{The Bloch theorem}

Let there be $\mathscr{B}:=\{f\in\ana{\DD}\colon|f'(0)|=1\}$. Remember that the Bloch theorem guarantees the existence of simple discs with a fixed radius in the image $f(\De)$, where $f\in\mathscr{B}$. Let $\op{B}(f)$ be a supremum of all radii of simple discs in $f(\De)$. We want to show that a constant $B>0$ exists such that $\op{B}(f)\geq B$ for every $f\in\mathscr{B}$.

By $S:=\{z\in\DD\colon f'(z)=0\}$ we denote a set of singular points. According to the open mapping theorem, $\Omega:=f(\DD)$ is a domain and $f(\overline{\DD})\subseteq\overline{\Omega}\subset\CC$. For every point $z\in\Omega$ there is a number $\rho(z)$ such that $\Dr{\rho(z)}(z)$ is the largest simple disc. Therefore $\op{B}(f)=\sup_{z\in\Omega}{\rho(z)}$ and $\op{B}(f)<\infty$. On $\DD$ we define a metric
\begin{equation}
\label{ena:bloch1}
\lambda(z) := \frac{A^2 |f'(z)|^2}{2\rho(f(z))\left(A^2-\rho(f(z))\right)^2},
\end{equation}
where $A$ is a constant, which satisfies $A^2>\op{B}(f)$. Since $\rho$ is a continuous function and $\rho(f(z))=0$ if and only if $z\in S$, then \eqref{ena:bloch1} is a continuous Hermitian metric at nonsingular points. We must care only at singular points. Take arbitrary $z_0\in S$. We know that there is a neighborhood $U'\ni z_0$ on $\DD$, $n\geq1$ and biholomorphic function $\varphi(z)$ on $U'$ such that $f(z)=f(z_0)+\varphi^n(z)$ on $U'$ (see e.g. \cite[Corollary 2.3.7]{BerGay1991}). Then there is a neighborhood $U\subset U'$ of point $z_0$ on $\DD$ such that $\rho(f(z))=|f(z)-f(z_0)|$ on $U$. Then the equation \eqref{ena:bloch1} can be rewritten as
\[
   \lambda(z)=\frac{A^2n^2|\varphi(z)|^{n-2}|\varphi'(z)|^2}{2(A^2-|\varphi(z)|^n)^2}
\]
for $z\in U$. Therefore \eqref{ena:bloch1} is a Hermitian pseudometric in the neighborhoods of singular points.

If we want to use the Ahlfors lemma, we need a supporting pseudometric for \eqref{ena:bloch1}. Take an arbitrary nonsingular point $z_0\in\DD\setminus S$. Then $s_0\in \overline{\DD}$ exists such that the boundary of $\Dr{\rho(f(z_0))}(f(z_0))$ contains a point $f(s_0)$. In the neighborhood $U$ of a point $z_0$ it is $\rho(f(z))\leq|f(z)-f(s_0)|$. On $U$ define a Hermitian metric
\begin{equation}
\label{ena:bloch2}
\lap(z) := \frac{A^2|f'(z)|^2}{2|f(z)-f(s_0)|\left(A^2-|f(z)-f(s_0)|\right)^2}.
\end{equation}
The inequality $\lap(z)\leq\lambda(z)$ will be satisfied on $U$ if $x(A^2-x)^2$ is an increasing function on $[0,\op{B}(f)]$. Since $\lap(z_0)=\lambda(z_0)$, metric \eqref{ena:bloch2}, which has constant curvature $-1$, will be supporting for \eqref{ena:bloch1} at $z_0$. A quick calculation shows that the function is increasing on $[0,A^2/3]$. Therefore the metric is supporting if $A^2>3\op{B}(f)$.

Let there be $f(0)=z_0$. By assumption $|f'(0)|=1$, the upper bounds combined with the Ahlfors lemma give
\[
   3\op{B}(f)<A^2\leq4\rho(z_0)(A^2-\rho(z_0))^2\leq 4\op{B}(f)(A^2-\op{B}(f))^2.
\]
Pushing $A^2$ toward $3\op{B}(f)$, we get $\op{B}(f)\geq \sqrt{3}/4$. Hence $B\geq \sqrt{3}/4$.

\textbf{Edmund G.~H.~Landau} (1877--1938) dropped the assumption about simple discs in the Bloch theorem.

\begin{theorem}
\label{thm:landau1}
Assume $f\in\ana{\DD}$ and $|f'(0)|=1$. Then a universal constant $L>0$ exists such that in the image $f(\DD)$ a disc with the radius $R\geq L$ exists.
\end{theorem}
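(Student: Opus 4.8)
The plan is to read off the statement from the Bloch theorem (Theorem~\ref{thm:bloch}) together with the estimate $\op{B}(f)\geq\sqrt{3}/4$ just obtained, exploiting the elementary observation that a \emph{simple} disc is in particular a disc contained in the image. The hard analytic work — the construction of the metric \eqref{ena:bloch1}, its supporting metric \eqref{ena:bloch2}, and the application of the Ahlfors lemma — has therefore already been carried out; only a short packaging argument remains, whose sole point is to pass from radii of simple discs to radii of arbitrary discs contained in $f(\DD)$.

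Concretely, I would set $\Omega:=f(\DD)$ and introduce $R(f):=\sup_{w\in\Omega}\operatorname{dist}(w,\CC\setminus\Omega)$, which is exactly the supremum of radii of open discs contained in $f(\DD)$, now with no simplicity requirement. Each simple disc $\Dr{\rho}(z_0)$ furnished by the Bloch theorem satisfies $\Dr{\rho}(z_0)\subseteq f(\DD)$, so the radii of simple discs form a subset of the radii entering $R(f)$; comparing suprema yields $R(f)\geq\op{B}(f)\geq\sqrt{3}/4$ uniformly over $f\in\mathscr{B}$. To produce an actual disc of that radius rather than merely a supremum, I would use that $f\in\ana{\DD}$ makes $\Omega$ bounded, so $w\mapsto\operatorname{dist}(w,\CC\setminus\Omega)$ is continuous on the compact set $\overline{\Omega}$, vanishes on $\partial\Omega$, and attains its maximum $R(f)$ at some $w^\ast$; since $R(f)>0$ the maximiser cannot lie on $\partial\Omega$, hence $w^\ast\in\Omega$ and $\Dr{R(f)}(w^\ast)\subseteq f(\DD)$. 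Taking $L:=\sqrt{3}/4$ then gives a disc of radius $R=R(f)\geq L$ in $f(\DD)$.

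I do not anticipate a genuine obstacle here; the only care needed is the bookkeeping distinction between ``supremum of radii'' and ``a disc of that radius exists,'' which is why the continuity-and-compactness step is included, and which can be sidestepped altogether by instead invoking Theorem~\ref{thm:bloch} at a fixed radius $R_0<\sqrt{3}/4\leq\op{B}(f)$ to obtain a disc of radius exactly $R_0$. Should one prefer a self-contained argument not routed through simple discs, I would re-run the Bloch computation verbatim with $\rho(z)$ reinterpreted as $\operatorname{dist}(f(z),\CC\setminus\Omega)$: the metrics \eqref{ena:bloch1}--\eqref{ena:bloch2} and the same monotonicity of $x(A^2-x)^2$ on $[0,A^2/3]$ reproduce the constant $\sqrt{3}/4$ while dispensing entirely with the biholomorphy of $f$ on the relevant subdomain.
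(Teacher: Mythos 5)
Your proposal is correct, but it takes a genuinely different route from the paper. You obtain Landau's theorem as a formal corollary of the Bloch theorem just proved: every simple disc is in particular a disc in the image, so $\op{L}(f)\geq\op{B}(f)\geq\sqrt{3}/4$, and your compactness step (continuity of $w\mapsto\operatorname{dist}(w,\CC\setminus\Omega)$ on $\overline{\Omega}$, vanishing on $\partial\Omega$) correctly upgrades the supremum to an actual disc of radius $R(f)\geq\sqrt{3}/4$; this is exactly the inequality $B\leq L$ that the paper records as a remark after its proof, so the reduction is legitimate and costs nothing beyond what was already established. The paper instead gives a second, logically independent application of the Ahlfors lemma: on $\Omega=f(\DD)$ it uses the logarithmic metrics $\lambda(z)=\frac{1}{2}\bigl(\rho(z)\log(C/\rho(z))\bigr)^{-2}$ and $\lap(z)=\frac{1}{2}\bigl(|z-s_0|\log(C/|z-s_0|)\bigr)^{-2}$, where $\rho(z)$ is the distance from $z\in\Omega$ to $\partial\Omega$; the supporting condition requires $x\log(Cx^{-1})$ to be increasing, i.e.\ $e\op{L}(f)<C$, and letting $C\to e\op{L}(f)$ gives $\op{L}(f)\geq1/2$. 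The trade-off: your argument is shorter and introduces no new metric, but it inherits the Bloch constant $\sqrt{3}/4\approx0.433$ together with the full machinery of the Bloch proof (in particular the singular-point analysis with $f(z)=f(z_0)+\varphi^n(z)$ needed there); the paper's proof does not depend on Bloch at all, is in fact simpler than the Bloch argument (no singular-point issues arise, since the distance $\rho$ is positive everywhere on $\Omega$), and yields the strictly better constant $L\geq1/2$. Your closing alternative---rerunning the Bloch computation with $\rho(z)$ reinterpreted as $\operatorname{dist}(f(z),\CC\setminus\Omega)$ and keeping the metrics \eqref{ena:bloch1}--\eqref{ena:bloch2}---would also work and likewise dispenses with biholomorphy, but it again only reproduces $\sqrt{3}/4$, whereas the paper's choice of the logarithmic metric is precisely what buys the improvement to $1/2$.
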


\begin{proof}
Proving this theorem is very similar to proving the Bloch theorem. Let there be a real and positive function $\rho(z)$ such that $\Dr{\rho(z)}(z)$ is the largest disc in $\Omega:=f(\DD)$. Define $\op{L}(f):=\sup_{z\in\Omega}{\rho(z)}$. Since we are not dealing with singular points, we take metrics \[
    \lambda(z):=\frac{1}{2}\left(\rho(z)\log\frac{C}{\rho(z)}\right)^{-2} \;\; \textrm{and} \;\;
    \lap(z):=\frac{1}{2}\left(|z-s_0|\log\frac{C}{|z-s_0|}\right)^{-2}
\]
on $\Omega$. The metric $\lap(z)$ is defined on a neighborhood $U$ of a point $z_0\in\Omega$ and $s_0\in\partial\Dr{\rho(z_0)}(z_0)\cap\partial\Omega$ where it has constant curvature $-1$. Therefore $\lap$ will be supporting for $\lambda$ at $z_0$ if the inequality $\lap(z)\leq\lambda(z)$ is satisfied on $U$. This will be true if $x\log(Cx^{-1})$ is an increasing function on $[0,\op{L}(f)]$. A function is increasing for $ex<C$, therefore the metric is supporting if $e\op{L}<C$.

Assume $f(0)=z_0$. According to the Ahlfors lemma it follows
\[
    1\leq\left(2\rho(z_0)\log\frac{C}{\rho(z_0)}\right)^2\leq\left(2\op{L}(f)\log\frac{C}{\op{L}(f)}\right)^2.
\]
Pushing $C$ toward $e\op{L}(f)$, we get $\op{L}(f)\geq1/2$ and hence $L\geq1/2$.
\end{proof}

The numbers $B$ and $L$ are called the \np{Bloch} and \np{Landau constant}. It follows from the definitions that $B\leq L$. Landau simplified Bloch's proof in 1926 and estimated $B\geq1/16$. It is possible to prove Bloch's theorem with bound $B\geq \sqrt{3}/4$ without Ahlfors' lemma; see \cite[\S10.1.4]{Rem}. Standard proof of Landau's theorem with bound $L\geq1/16$ could be found in \cite[Proposition 2.7.10]{BerGay1991}. The exact values of Bloch and Landau constants are not known \cite[\S10.1.5]{Rem}.

\subsection{The Little Picard theorem} \label{sec:little.picard}

The Little Picard theorem deals with domain $\CC\setminus\{0,1\}$. Therefore, the question whether a Hermitian metric with curvature, bounded with negative constant exists, is reasonable. If this is so, we can use the Ahlfors lemma.

Introduce $\ppC:=\CC\setminus\{0,1\}$. Define
\[
     A_1(z):= \log\frac{C|z|^2}{1+|z|^2}, \;\; A_2(z):= \log\frac{C|z-1|^2}{2(1+|z|^2)}, \;\; A_3(z):= \log\frac{C}{1+|z|^2}
\]
for a constant $C>9$, which will be determined later. The expressions are well-defined on $\ppC$. We will prove that for
\begin{equation}
\label{ena:met.ppC}
\lambda_{\ppC}(z) := \frac{4(1+|z|^2)}{|z|^2|z-1|^2A_1^2(z)A_2^2(z)A_3^2(z)}
\end{equation}
the metric $ds_{\ppC}^2:=2\lambda_{\ppC}(z)|dz|^2$ is a complete Hermitian metric on $\ppC$ with curvature $K(z):=K_{ds_{\ppC}^2}(z)<-1$.

From \eqref{ena:met.ppC} we can see that in the neighborhood of a point $a\in\{0,1\}$ it is
\[
     \lambda_{\ppC}(z) > \frac{A}{|z-a|^2\log^2\left(C|z-a|\right)}
\]
for a constant $A>0$. Let there be a polar presentation $\gamma(t)=r(t)\exp{(\ie\varphi(t))}+a$ of a piecewise $\Cc^1$-path $\gamma\colon[0,1)\rightarrow\ppC$ from $\gamma(0)$ to $\lim_{t\to1}\gamma(t)\in\{0,1\}$. This means that for $t\to1$ it follows $r(t)\to 0$. For the ``point at infinity'' we take the path $\gamma(t)=r(t)\exp{(\ie\varphi(t))}$, where for $t\to1$ it follows $r(t)\to\infty$. Since
\[
     \int_{0}^{t_0} \frac{2\sqrt{A}|\dot{r}+r\dot{\varphi}|dt}{r|\log{Cr}|} \geq \int_{0}^{t_0} \frac{2\sqrt{A}\dot{r}dt}{r\log{Cr}} =
     \sqrt{A}\log\frac{\log^2Cr(t_0)}{\log^2Cr(0)}\xrightarrow{t_0\to1}\infty,
\]
$\left(\ppC,d_{\ppC}\right)$ is b-complete. By Corollary \ref{cor:complete} metric $ds_{\ppC}^2$ is a complete Hermitian metric.

The corresponding Gauss curvature is
\begin{flalign*} K(z) = &-\frac{|z|^2|z-1|^2A_1^2(z)A_2^2(z)A_3^2(z)}{4(1+|z|^2)^{3}} - \frac{|z-1|^2A_2^2(z)A_3^2(z)(1+|z|^2A_1(z))}{2(1+|z|^2)^{3}}
\\
                         &-\frac{|z|^2A_1^2(z)A_3^2(z)(|z+1|^2+|z-1|^2A_2(z))}{2(1+|z|^2)^{3}} \\
                         &-\frac{|z|^2|z-1|^2A_1^2(z)A_2^2(z)(|z|^2+A_3(z))}{2(1+|z|^2)^{3}}.
\end{flalign*}
It can be derived from the expression above that
\begin{flalign*}
\lim_{z\to0}K(z) &= \lim_{z\to\infty}K(z)=-(1/2)\log^2(C/2)\log^2C<-1, \\
\lim_{z\to1}K(z) &= -(1/4)\log^4(C/2)<-1.
\end{flalign*}
For $r_1,r_2,r_3>0$ let us introduce a domain
\[
    X_{r_1,r_2,r_3}:=\{z\in\CC\colon0<|z|<r_1\;\textrm{or}\;0<|z-1|<r_2\;\textrm{or}\;r_3<|z|\}.
\]
Positive numbers $r_1,r_2,r_3$ exist such that $K(z)<-1$ for $z\in X_{r_1,r_2,r_3}$. Because $\ppC\setminus X_{r_1,r_2,r_3}$ is compact, there is a constant $C>9$ such that $K(z)<-1$ for $z\in\ppC\setminus X_{r_1,r_2,r_3}$. Then $K(z)<-1$ on $\ppC$.

Let there be $r>0$ and $f\in\hol{\Dr{r}}{\ppC}$. Then $f(rz)\in\hol{\DD}{\ppC}$. According to the Ahlfors lemma it follows
\begin{equation}
\label{ena:pos.ahl}
r|f'(0)|\leq \frac{2}{\lambda_{\ppC}(f(0))}.
\end{equation}

We can now prove the Little Picard theorem. Assume that $f$ is an entire function such that $f(\CC)\subseteq\ppC$. Choose an arbitrary point $z_0\in\CC$ and introduce a function $g(z):=f(z+z_0)$. Let there be an increasing and unbounded sequence $\{r_n\}_{n=1}^\infty$ of positive real numbers and $g_n:=g|_{\Dr{r_n}}$. By equation \eqref{ena:pos.ahl} for every $n\in\NN$ it follows
\[
    |f'(z_0)|=|g_n'(0)|\leq\frac{2}{r_n \lambda_{\ppC}(g_n(0))}=\frac{2}{r_n \lambda_{\ppC}(f(z_0))}\xrightarrow{n\to\infty}0,
\]
since $g_n(0)=f(z_0)$ and $g_n'(0)=f'(z_0)$. Hence $f'(z_0)=0$. Because $z_0$ was an arbitrary point, it follows $f'\equiv0$ on $\CC$. This means that $f$ is a constant function.

By using inequality \eqref{ena:pos.ahl} we are able to provide a very easy proof of the following Landau theorem from 1904.

\begin{theorem}
\label{thm:landau2}
Assume that $f\in\hol{\Dr{r}}{\ppC}$ for some $r>0$ and $f'(0)\neq0$. Then there is a constant $C>0$, depending only on $f(0)$ and $f'(0)$ such that $r\leq C$.
\end{theorem}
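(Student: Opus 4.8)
The plan is to read the conclusion directly off inequality \eqref{ena:pos.ahl}, which was established for an \emph{arbitrary} $f\in\hol{\Dr{r}}{\ppC}$: composing with the dilation $z\mapsto rz$ produces a map in $\hol{\DD}{\ppC}$, and since the metric $ds_{\ppC}^2$ has curvature $K<-1$ on all of $\ppC$, the Ahlfors lemma (with $L=-1$, so that $|L|^{-1}=1$) compares its pullback with the Poincar\'{e} metric at the origin and yields
\[
r|f'(0)|\leq\frac{2}{\lambda_{\ppC}(f(0))}.
\]
This is the only ingredient I need.

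Since by hypothesis $f'(0)\neq0$, I would simply divide by $|f'(0)|$ and set
\[
C:=\frac{2}{|f'(0)|\,\lambda_{\ppC}(f(0))},
\]
so that $r\leq C$. The number $C$ is finite and strictly positive because $f(0)\in\ppC$ and $ds_{\ppC}^2$ is a genuine Hermitian metric there, whence $\lambda_{\ppC}(f(0))>0$ by \eqref{ena:met.ppC}.

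It remains to observe that $C$ depends only on the two quantities claimed. The function $\lambda_{\ppC}$ is a single fixed function on $\ppC$, determined by \eqref{ena:met.ppC} once the universal constant in that construction has been chosen; hence the value $\lambda_{\ppC}(f(0))$ is determined entirely by the point $f(0)$, while $|f'(0)|$ enters explicitly. Thus $C=C(f(0),f'(0))$, as required. (To avoid a name clash with the constant called $C$ inside \eqref{ena:met.ppC}, I would reserve a distinct symbol for it in the write-up.)

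There is essentially no obstacle here: all of the real work lies in the preceding construction of the complete, negatively curved metric $ds_{\ppC}^2$ and in the derivation of \eqref{ena:pos.ahl}, both already carried out. The theorem is then an immediate corollary, the point being that the Ahlfors lemma converts the lower curvature bound into an upper bound on $|f'(0)|$ scaled by $r$, which---once $f'(0)\neq0$---becomes an upper bound on the radius $r$ itself.
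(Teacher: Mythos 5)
Your proposal is correct and is essentially identical to the paper's own proof: both read the conclusion directly off inequality \eqref{ena:pos.ahl} and take $C=2/\left(|f'(0)|\,\lambda_{\ppC}(f(0))\right)$. Your added remarks on the positivity of $\lambda_{\ppC}(f(0))$ and the name clash with the constant inside \eqref{ena:met.ppC} are fine but not a different argument.
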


\begin{proof}
Inequality \eqref{ena:pos.ahl} suggests that a good choice for a constant is
\[
    C=\frac{2}{|f'(0)|\lambda_{\ppC}(f(0))},
\]
which only depends on $f(0)$ and $f'(0)$.
\end{proof}

Assume that $f(z)=a_0+a_1z+a_2z^2+\cdots$ is a power series expansion of $f$ at $0$. Then $f(0)=a_0$ and $f'(0)=a_1$. Theorem \ref{thm:landau2} has the following equivalent form: \emph{if $f$ omits $0$ and $1$ and $a_1\neq0$, then a constant $C(a_0,a_1)>0$ exists such that the convergence radius of $f$ is not greater than $C(a_0,a_1)$}. The story goes that Landau was reluctant to publish the above theorem because he found it too absurd \cite[\S10.2.2]{Rem}.

\subsection{The Schottky theorem}

In 1904, the German mathematician \textbf{Friedrich H.~Schottky} (1851--1935) studied the size of an image of a disc under a holomorphic mapping, which omits two distinct points on $\CC$. In our proof completeness of a metric \eqref{ena:met.ppC} is called for. The Hopf-Rinow characterization with closed balls can be used. The theorem is a bridge between the Little and Big Picard theorems.

\begin{theorem}
\label{izr:schottky}
Let $R$ and $C$ be positive real numbers. Assume that we have $f\in\hol{\Dr{R}}{\ppC}$ such that $|f(0)|<C$. Then for every $r\in(0,R)$ a constant $M$ exists, depending only on $R$, $r$ and $C$ such that $|f(z)|\leq M$ for $|z|\leq r$.
\end{theorem}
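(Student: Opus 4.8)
The plan is to convert the hypothesis into a bound on the intrinsic distance $d_{\ppC}$ by means of the Ahlfors lemma, and then to upgrade that distance bound into a Euclidean modulus bound using the completeness of the metric $ds_{\ppC}^2$ together with the Hopf--Rinow theorem \ref{izr:HR}. First I would rescale: since $f\in\hol{\Dr{R}}{\ppC}$, the map $F(z):=f(Rz)$ lies in $\hol{\DD}{\ppC}$. The metric $ds_{\ppC}^2$ has curvature $K(z)<-1$, so the Ahlfors lemma applies with $L=-1$ (it is $\Cc^2$, hence supports itself), and \eqref{ena:pos.ahlfors} gives $d_{\ppC}(F(0),F(w))\le\rho(0,w)$ for every $w\in\DD$. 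For $|z|\le r$ take $w=z/R$, so that $|w|\le r/R<1$; since the Poincaré distance from the origin depends only on the modulus and is increasing in it, I obtain $d_{\ppC}(f(0),f(z))\le\rho(0,z/R)\le\rho_0$, where $\rho_0:=\log\frac{R+r}{R-r}$ depends only on $R$ and $r$.

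Thus every value $f(z)$ with $|z|\le r$ lies within $d_{\ppC}$-distance $\rho_0$ of the center $f(0)$, and $|f(0)|<C$. The obstacle is that this center moves with $f$ and need not stay in a \emph{compact} subset of $\ppC$: as $f$ varies it may crowd toward the punctures $0$ or $1$, so I cannot simply invoke compactness of one fixed ball $\overline{B}_{d_{\ppC}}(f(0),\rho_0)$ and expect a bound uniform in $f$. The key trick I would use to funnel everything into a fixed compact set is a circle-crossing argument. Let $\Gamma:=\{z\in\CC\colon|z|=C+1\}$; since $C>0$, this circle avoids both $0$ and $1$, so $\Gamma$ is a compact subset of $\ppC$. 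Now if $|f(z)|\le C+1$ we are already done, while if $|f(z)|>C+1$, any path realizing (up to $\varepsilon$) the distance $d_{\ppC}(f(z),f(0))\le\rho_0$ must bring the modulus down from above $C+1$ to below $C$, hence it meets $\Gamma$ at some point $p$; the initial segment up to $p$ then shows $d_{\ppC}(f(z),\Gamma)\le d_{\ppC}(f(z),p)\le\rho_0$. In either case $f(z)$ lies in the closed $\rho_0$-neighborhood $N:=\{w\in\ppC\colon d_{\ppC}(w,\Gamma)\le\rho_0\}$ of the fixed compact set $\Gamma$.

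It remains to bound the modulus on $N$. Here I would invoke completeness of $ds_{\ppC}^2$ (established for $\ppC$) and Theorem \ref{izr:HR}, which makes every closed $d_{\ppC}$-ball compact. Fixing a basepoint $a_0\in\Gamma$ and writing $D:=\mathrm{diam}_{d_{\ppC}}(\Gamma)<\infty$ (finite because $\Gamma$ is compact and $d_{\ppC}$ continuous), the triangle inequality gives $N\subseteq\overline{B}_{d_{\ppC}}(a_0,\rho_0+D)$, so $N$ is a closed subset of a compact ball and is therefore itself compact in $\ppC$. A compact subset of $\ppC$ is bounded in modulus, so $M':=\sup_{w\in N}|w|<\infty$. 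Setting $M:=\max\{C+1,M'\}$ then yields $|f(z)|\le M$ for all $|z|\le r$, and $M$ depends only on $R$, $r$ (through $\rho_0$) and $C$ (through $\Gamma$ and $D$), as required. The essential difficulty is exactly the non-compactness of the possible centers near the punctures; the circle $\Gamma$ resolves it by replacing the moving center with a single fixed compact set whose bounded neighborhood is compact by Hopf--Rinow.
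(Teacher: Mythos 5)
Your proof is correct, and it follows the paper's overall strategy---rescale, apply the Ahlfors lemma via \eqref{ena:pos.ahlfors} to get $d_{\ppC}(f(0),f(z))\le\rho(0,r/R)$, then use completeness of the metric \eqref{ena:met.ppC} and the Hopf--Rinow theorem to turn a $d_{\ppC}$-bound into a Euclidean bound---but it executes the crucial compactness step differently, and in fact more carefully than the paper does. The paper works with the closed ball $B=\overline{B}_{d_{\ppC}}(a,r')$ centered at $a=f(0)$, notes it is compact, and then simply asserts that the resulting constant $M_1$ with $|z-a|<M_1$ on $B$ ``depends only on $R$, $r$ and $C$''. Since the center $a$ varies with $f$ and ranges over the non-compact set $\{|a|<C\}\setminus\{0,1\}$ (it may approach the punctures), that uniformity claim is exactly the point needing justification, and the paper gives none. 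Your circle-crossing trick supplies precisely this missing justification: you replace the moving center by the fixed compact circle $\Gamma=\{|w|=C+1\}\subset\ppC$, show by the intermediate value theorem that any nearly length-minimizing path from $f(z)$ (when $|f(z)|>C+1$) down to $f(0)$ must cross $\Gamma$, and then trap everything inside the single compact ball $\overline{B}_{d_{\ppC}}(a_0,\rho_0+D)$, whose Euclidean boundedness yields a constant manifestly independent of $f$. The minor overstatement that ``in either case $f(z)$ lies in $N$'' is harmless, since your final bound $M=\max\{C+1,M'\}$ handles the case $|f(z)|\le C+1$ separately. In short, your route buys genuine uniformity in $f$ where the paper's proof merely asserts it; the paper's version is shorter, but as written it has a gap at exactly the point you identified.
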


\begin{proof}
Take $f$ and $r$ from the theorem and set $a:=f(0)$ and $g(z):=f(Rz)$. Then $g\in\hol{\DD}{\ppC}$ and $g(0)=a$. Define $r':=\rho(0,r/R)$. Let $ds_{\ppC}^2$ be metric \eqref{ena:met.ppC} and the $d_{\ppC}$ corresponding distance. Since $d_{\ppC}$ is complete, every closed ball $B:=\overline{B}_{d_{\ppC}}(a,r')$ is compact. Hence, a constant $M_1$ exists, depending only on $R$, $r$ and $C$ such that $|z-a|<M_1$ for every $z\in B$. For every $f\in\hol{\DD}{\ppC}$ there is by \eqref{ena:pos.ahlfors}
\[
    d_{\ppC}(a,f(z))\leq\rho(0,z)\leq r'
\]
for $|z|\leq r$ and therefore $|f(z)-a|<M_1$. Since $|a|<C$, it follows $|f(z)|<C+M_1$ for $|z|\leq r$. The theorem follows after setting $M:=M_1+C$.
\end{proof}

\subsection{The Big Picard theorem}

While studying the properties of $\holC{\Omega}$, we should introduce the concept of \emph{normal families}: a family $\mathscr{F}\subset\holC{\Omega}$ is normal if every sequence in $\mathscr{F}$ has a convergent subsequence in $\holC{\Omega}$, where convergence is uniformly on compact sets. For $\mathscr{F}$ we say that it is \emph{bounded on $\Omega$} if for every compact set $K\subset\Omega$ a constant $C(K)$ exists such that
\begin{equation}
\label{equ:omejenost}
\sup_{f\in\mathscr{F}}\left(\sup_{z\in K}|f(z)|\right)\leq C(K).
\end{equation}
A common sign for $\sup_{z\in K}|f(z)|$ is $\|f\|_K$. \textbf{Paul A.~A.~Montel} (1876--1975) proved the following theorem in 1907.

\begin{theorem}
\label{izr:montel}
A family $\mathscr{F}\subset\holC{\Omega}$ is bounded on $\Omega$ if and only if it is normal.
\end{theorem}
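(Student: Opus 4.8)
The plan is to prove the two implications separately, with essentially all of the work lying in the direction that boundedness implies normality; this is the classical content of Montel's theorem. The converse is a short contradiction argument. Suppose $\mathscr{F}$ is normal but not bounded on $\Omega$. Then there is a compact set $K\subset\Omega$ and a sequence $\{f_n\}\subset\mathscr{F}$ with $\|f_n\|_K\to\infty$. By normality some subsequence $\{f_{n_k}\}$ converges uniformly on compact sets to a limit $f\in\holC{\Omega}$; since $f$ is continuous, $\|f\|_K<\infty$, and uniform convergence on $K$ forces $\|f_{n_k}\|_K\to\|f\|_K$, contradicting $\|f_n\|_K\to\infty$. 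Hence a normal family is bounded.

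For the main direction, assume $\mathscr{F}$ is bounded on $\Omega$. The first ingredient is local equicontinuity, obtained from Cauchy's estimates. Fix a compact $K\subset\Omega$ and choose $\delta>0$ so small that the closed $\delta$-neighbourhood $K'$ of $K$ is still a compact subset of $\Omega$. By hypothesis there is $M:=C(K')$ with $|f(\zeta)|\le M$ for all $\zeta\in K'$ and all $f\in\mathscr{F}$. For $z,w\in K$ with $|z-w|<\delta/2$, Cauchy's integral formula on the circle $|\zeta-z|=\delta$ gives
\[
f(z)-f(w)=\frac{w-z}{2\pi\ie}\oint_{|\zeta-z|=\delta}\frac{f(\zeta)}{(\zeta-z)(\zeta-w)}\,\dif{\zeta},
\]
and since $|\zeta-z|=\delta$ and $|\zeta-w|\ge\delta/2$ on the contour, one reads off $|f(z)-f(w)|\le \frac{2M}{\delta}\,|z-w|$, uniformly in $f\in\mathscr{F}$. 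Thus $\mathscr{F}$ is equi-Lipschitz, hence equicontinuous, and pointwise bounded on $K$.

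The second ingredient is a diagonal extraction. Take a compact exhaustion $K_1\subset K_2\subset\cdots$ of $\Omega$ with $K_j\subset\mathrm{int}\,K_{j+1}$ and $\bigcup_j K_j=\Omega$. Given any sequence $\{f_n\}\subset\mathscr{F}$, the previous step together with the Arzel\`a--Ascoli theorem yields a subsequence converging uniformly on $K_1$; refining this subsequence successively on $K_2,K_3,\dots$ and passing to the diagonal produces one subsequence converging uniformly on every $K_j$, hence uniformly on every compact subset of $\Omega$. The locally uniform limit $f$ is continuous, and by Morera's theorem (equivalently, by passing to the limit in Cauchy's integral formula) $f\in\holC{\Omega}$. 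Therefore the subsequence converges in $\holC{\Omega}$, and $\mathscr{F}$ is normal.

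The main obstacle is the equicontinuity step: one must convert the hypothesis of boundedness \emph{on each compact set} into a single Lipschitz bound valid for the \emph{entire family} on $K$, which is precisely why the estimate is taken on the slightly enlarged compact $K'$ so that the constant $M=C(K')$ controls every $f\in\mathscr{F}$ simultaneously. Once this uniform bound is secured, the Arzel\`a--Ascoli theorem, the diagonal argument, and the holomorphy of the limit are routine.
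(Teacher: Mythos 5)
Your proof is correct and takes essentially the same route as the paper: the converse via the triangle-inequality argument on sup norms, and the main direction via Cauchy estimates for equicontinuity, the Arzel\`a--Ascoli theorem, and holomorphy of locally uniform limits (the paper invokes Weierstrass where you use Morera); you simply fill in the details --- the explicit Lipschitz bound, the compact exhaustion, and the diagonal extraction --- that the paper leaves to its reference. One trivial slip: the prefactor in your Cauchy-formula identity should be $(z-w)/(2\pi\ie)$ rather than $(w-z)/(2\pi\ie)$, which of course does not affect the modulus estimate.
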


The difficult part of proving the Montel theorem comes from the implication from a bound to normality, since the opposite direction is quite clear. Assume that a compact set $L\subset\Omega$ exists for which \eqref{equ:omejenost} is not true. Then a sequence $\{f_n\}\subset\mathscr{F}$ exists, such that $\|f_n\|_L\rightarrow\infty$ and does not have a convergent subsequence with the limit $f$, since $\|f_n\|_L-\|f-f_n\|_L \leq \|f\|_L$. The core of the problem is the celebrated \emph{Arzel\'{a}-Ascoli theorem} (details may be found in \cite[\S5.5]{AhlCplx}), which asserts that a family $\mathscr{F}$ of continuous functions on $\Omega$ is relatively compact if and only if the family is equicontinuous and $f(z_0)$ is relatively compact for every $z_0\in\Omega$ and every $f\in\mathscr{F}$. The latter is satisfied since $\mathscr{F}$ is bounded and equicontinuity follows from the Cauchy inequality. By Weierstrass' theorem, which asserts that a family of holomorphic functions is closed in a family of continuous functions, we see that the Arzel\'{a}-Ascoli theorem implies the Montel theorem.

It is useful to expand the definition of normality in the direction that allows uniform convergence on compact sets to $\infty$. A closed family $\mathscr{F}\subset\hol{\Omega_1}{\Omega_2}$ is \np{normal} if every sequence in $\mathscr{F}$ has convergent subsequence or this sequence is \emph{compactly divergent}. This means that for arbitrary compact sets $K\subset\Omega_1$ and $L\subset\Omega_2$ integer $N\in\NN$ exists such that $f_n(K)\cap L = \varnothing$ for all $n>N$.

For the proof of the next theorem we need the classical result (see e.g.~\cite[p.~178]{AhlCplx}) by A.~Hurwitz: \emph{Assume that $\{f_n\}\subset\holC{\Omega}$ is a convergent sequence with the limit $f\in\holC{\Omega}$. If $a\in\CC$ exists such that $a\notin f_n(\Omega)$ for every $n\in\NN$, then $a\notin f(\Omega)$ or $f\equiv a$.}

\begin{theorem}[Normality theorem]
\label{izr:os.krit.nor}
Let there be $a,b\in\CC$, $a\neq b$. Then the family $\mathscr{F}\subseteq\hol{\Omega}{\CC\setminus\{a,b\}}$ is normal for every domain $\Omega\subset\CC$.
\end{theorem}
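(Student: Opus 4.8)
The plan is to reduce to the model domain $\ppC=\CC\setminus\{0,1\}$ and then exploit the complete, negatively curved metric $ds_{\ppC}^2$ built in Subsection \ref{sec:little.picard}. Since $z\mapsto(b-a)z+a$ is a biholomorphism of $\ppC$ onto $\CC\setminus\{a,b\}$, post-composition with its inverse carries $\mathscr{F}$ into $\hol{\Omega}{\ppC}$ without affecting convergence or compact divergence, so I may assume $\mathscr{F}\subseteq\hol{\Omega}{\ppC}$. Recall that $ds_{\ppC}^2$ has Gauss curvature $K<-1$ and that its distance $d_{\ppC}$ is complete; by the Hopf--Rinow theorem \ref{izr:HR} every closed ball $\overline{B}_{d_{\ppC}}(w,\rho)$ is compact, and by the proposition on inner distances the topology of $d_{\ppC}$ coincides with the Euclidean one on $\ppC$.

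The engine is the distance-decreasing inequality \eqref{ena:pos.ahlfors}: for every $f\in\hol{\DD}{\ppC}$ and $p,q\in\DD$ one has $d_{\ppC}(f(p),f(q))\le\rho(p,q)$, and the same holds with $\rho$ replaced by the Poincar\'e distance $\rho_D$ of any disc $D\subset\Omega$ after rescaling. First I would upgrade this to a bound on arbitrary compact sets: given a compact $K\subset\Omega$, choose a compact connected $K'$ with $K'\supseteq K\cup\{z_0\}$ and cover it by finitely many discs contained in $\Omega$; chaining the inequality along this finite chain yields a constant $R_K<\infty$, independent of $f\in\mathscr{F}$, with $d_{\ppC}(f(z),f(z_0))\le R_K$ for all $z\in K$. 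This uniform Lipschitz-type control, transported through a single base point $z_0\in\Omega$, is what makes the dichotomy below global.

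Given a sequence $\{f_n\}\subset\mathscr{F}$, pass to a subsequence so that $f_n(z_0)\to w_\infty$ in $\cp{1}$. In the first case $w_\infty\in\ppC$: then $f_n(K)\subset\overline{B}_{d_{\ppC}}(f_n(z_0),R_K)\subset\overline{B}_{d_{\ppC}}(w_\infty,R_K+1)$ for large $n$, a fixed compact subset of $\ppC$, hence $\|f_n\|_K$ is bounded uniformly in $n$ for every compact $K$; Montel's theorem \ref{izr:montel} then extracts a locally uniformly convergent sub-subsequence. In the second case $w_\infty\in\{0,1,\infty\}$: fix $w_0\in\ppC$; since the $d_{\ppC}$-topology equals the Euclidean one and closed balls are compact, $f_n(z_0)\to w_\infty\notin\ppC$ forces $d_{\ppC}(f_n(z_0),w_0)\to\infty$. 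For any compact $L\subset\ppC$ choose $\rho_0$ with $L\subset\overline{B}_{d_{\ppC}}(w_0,\rho_0)$; then for $z\in K$ one has $d_{\ppC}(f_n(z),w_0)\ge d_{\ppC}(f_n(z_0),w_0)-R_K>\rho_0$ once $n$ is large, so $f_n(K)\cap L=\varnothing$ and the subsequence is compactly divergent.

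The main obstacle is precisely the passage from the disc-level inequality \eqref{ena:pos.ahlfors} to the uniform-on-compacts estimate, i.e.\ the chaining argument that threads all of $\mathscr{F}$ through one base point so that the convergent/divergent alternative is decided globally by the single limit $w_\infty$; once this is in place, completeness of $d_{\ppC}$ (via Theorem \ref{izr:HR}) and Montel's theorem \ref{izr:montel} finish the two cases routinely. Hurwitz's theorem, quoted above, identifies the possible limits in the first case as either a map into $\ppC$ or one of the constants $0,1$, but this refinement is not needed for normality itself.
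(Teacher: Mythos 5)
Your proof is correct, but it follows a genuinely different route from the paper. The paper localizes: normality is checked near each point $x\in\Omega$, using the Schottky theorem to convert boundedness of $\{f_n(x)\}$ into a uniform bound on a disc around $x$, then Montel's theorem for a convergent subsequence, and finally Hurwitz's theorem to show that a limit touching $\{0,1\}$ must be the constant $0$ or $1$, which is then reinterpreted as compact divergence. You instead work globally with the complete metric $d_{\ppC}$ itself: the chained Ahlfors inequality through a single base point $z_0$ gives $d_{\ppC}(f(z),f(z_0))\leq R_K$ uniformly over $\mathscr{F}$ and over compact $K$, after which the whole dichotomy is decided by the single limit $w_\infty$ of $f_n(z_0)$ in $\cp{1}$ — Hopf--Rinow compactness of closed $d_{\ppC}$-balls puts the images inside a fixed compact subset of $\ppC$ when $w_\infty\in\ppC$ (so Montel applies and the limit automatically maps into $\ppC$, making Hurwitz unnecessary), and forces escape from every compact subset when $w_\infty\in\{0,1,\infty\}$. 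What the paper's route buys is modularity (Schottky is proved anyway as a theorem of independent interest) and the avoidance of any chaining argument; what your route buys is a cleaner global dichotomy with fewer classical ingredients, and in effect it proves directly that $\ppC$ is taut, anticipating the remark in the final section that complete hyperbolicity implies tautness. The one place you are sketchy is the chaining step: to get $R_K$ finite you must arrange the finite cover so that the Poincar\'{e} distances used in each link stay bounded (e.g.\ cover a compact connected $K'\supseteq K\cup\{z_0\}$ by discs $\Dr{r_i}(c_i)$ with $\Dr{2r_i}(c_i)\subset\Omega$ and apply the rescaled inequality in the larger discs); this is standard and does not affect correctness, but it is the step that deserves the detail you deferred.
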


\begin{proof}
Let there be $\mathscr{F}\subset\hol{\Omega}{\CC\setminus\{0,1\}}$ and $\{f_n\}_{n=1}^\infty\subset\mathscr{F}$ is an arbitrary sequence. It is enough to show that for every point $x\in\Omega$ there is a neighborhood $U\subset\Omega$ such that the family $\{f_n|_U\colon n\in\NN\}$ is normal.

Choose a fixed but arbitrary point $x\in\Omega$. An unbounded sequence $\{f_n(x)\}$ is compactly divergent. If the sequence $\{f_n(x)\}$ is bounded, then according to the Schottky theorem $C>0$ exists such that $f_n(\Dr{r}(x))\subset\Dr{C}$, where such $r>0$ is chosen that $\Dr{r}(x)\subset\Omega$. Since
\[
     \sup_{f\in\{f_n\}}\|f\|_{\overline{\DD}_r(x)} \leq C,
\]
according to the Montel theorem, a subsequence $\{f_{n1}\}\subset\{f_n\}$ exists such that $f_{n1}$ uniformly converges to $f\in\holC{\Dr{r}(x)}$ on compact sets. If $f(\Dr{r}(x))\subset\CC\setminus\{0,1\}$, the goal has been achieved. Therefore, let us have $z_0\in\Dr{r}(x)$ such that $f(z_0)\in\{0,1\}$. Assume that $f\neq f(z_0)$. According to the Hurwitz theorem $N\in\NN$ exists such that $f(z_0)\in f_{n1}(\Dr{r}(x))$ for all $n>N$. Because this is not true, it follows that $f\equiv f(z_0)$. Therefore, the sequence is compactly divergent.
\end{proof}

Montel proved the Normality theorem in 1912, which is why it is sometimes referred to as \emph{the Big Montel theorem}.

The Normality theorem is used to prove the Big Picard theorem. We are going to prove the sharper and not so widely known form of the theorem by \textbf{Gaston M.~Julia} (1893--1978) from 1924. For the formulation of the theorem the following ``cone-shape'' domain is needed
\[
    J(\zeta,\alpha):=\left\{t_1e^{\ie(\varphi_0+t_2\alpha)}\colon t_1\in(0,1),t_2\in(-1/2,1/2),e^{\ie\varphi_0}=\zeta\right\} \subset \DD^\ast.
\]
The domain is a disc section with an angle $\alpha$, which is symmetric on a chord with endpoints $0$ and $\zeta\in\partial\DD$.

\begin{theorem}
Assume that $f\in\holC{\DD^\ast}$ with an essential singularity at $0$. Then $\zeta\in\partial\DD$ exists such that for all $\alpha>0$ function $f$ on $J(\zeta,\alpha)$ takes every value in $\CC$ infinitely often with no more than one exception.
\end{theorem}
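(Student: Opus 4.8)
The plan is to deduce the theorem from the Normality theorem \ref{izr:os.krit.nor} by rescaling $f$ toward the singularity and locating a boundary direction at which the rescaled family fails to be normal. First I would fix the annulus $A:=\{z\in\CC\colon 1/2<|z|<2\}$ and set $f_n(z):=f(z/2^n)$, which is holomorphic on $A$ for every $n\geq 1$ since $0<|z|/2^n<1$ there. The whole argument rests on the family $\mathscr{F}:=\{f_n\}_{n\geq1}$, understood in the extended sense that permits compact divergence.

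Step one: show that $\mathscr{F}$ is not normal on $A$. Assuming the contrary, some subsequence $\{f_{n_k}\}$ converges locally uniformly on $A$ either to a holomorphic $g$ or to $\infty$. In the first case $g$ is bounded on the compact circle $\{|z|=1\}$, so $|f|$ is bounded on the circles $\{|z|=2^{-n_k}\}$, and the maximum modulus principle applied to the annuli between consecutive such circles bounds $|f|$ on a full punctured neighborhood of $0$. In the second case the same argument applied to $1/f$ (holomorphic for $|z|$ small, since $f$ is then zero-free there) shows $|f|\to\infty$ as $z\to0$. Either conclusion means $\lim_{z\to0}|f(z)|$ exists in $[0,\infty]$, so the image of a punctured neighborhood is not dense in $\CC$, contradicting the Sohocki--Casorati--Weierstrass characterization of the essential singularity. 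Hence $\mathscr{F}$ has a non-normality point $z_0\in A$, and I set $\zeta:=z_0/|z_0|\in\partial\DD$.

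Step two: convert non-normality at $z_0$ into the stated value distribution. Fix any $\alpha>0$ and suppose, for contradiction, that on $J(\zeta,\alpha)$ the map $f$ takes two distinct values $a\neq b$ only finitely often. Then there is $\delta>0$ with $a,b\notin f\bigl(J(\zeta,\alpha)\cap\{0<|z|<\delta\}\bigr)$. Choose a disc $D:=\Dr{\rho}(z_0)\subset A$ so small that every point of $D$ has argument within $\alpha/2$ of $\arg z_0$; this is possible because the angular width of $D$ about the origin tends to $0$ with $\rho$. Multiplying by the positive scalar $2^{-n}$ preserves arguments and sends the moduli into $(0,\delta)$ once $n$ is large, so $2^{-n}D\subset J(\zeta,\alpha)\cap\{0<|z|<\delta\}$ for all large $n$. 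Consequently $f_n(D)=f(2^{-n}D)\subset\CC\setminus\{a,b\}$ for all large $n$, so a tail of $\mathscr{F}$ maps $D$ into $\CC\setminus\{a,b\}$ and is normal by Theorem \ref{izr:os.krit.nor}; since a normal tail makes the whole sequence normal at $z_0\in D$, this contradicts Step one. Therefore $f$ cannot take two values finitely often in $J(\zeta,\alpha)$, i.e.\ it takes every value infinitely often with at most one exception. As $\zeta$ was fixed once and for all in Step one while $\alpha>0$ is arbitrary here, the conclusion holds for every $\alpha$ simultaneously.

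I expect the main obstacle to be Step one: extracting a genuine non-normality point from the essential singularity, where one must combine the maximum (and, via $1/f$, the minimum) modulus principle with the Sohocki--Casorati--Weierstrass theorem and rule out both a finite and an infinite limit. A secondary delicate point is arranging Step two so that a \emph{single} direction $\zeta$ serves \emph{all} $\alpha>0$; this is handled by the observation that non-normality at the point $z_0$ persists on every disc $\Dr{\rho}(z_0)$, which lets the radius $\rho$ shrink with $\alpha$ while the same $\zeta$ is retained, together with allowing the one exceptional value to differ from sector to sector.
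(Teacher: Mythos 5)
Your argument is the classical ``Julia direction'' proof and is genuinely different from the paper's: the paper negates the conclusion at \emph{every} boundary point, uses compactness of $\partial\DD$ to produce a punctured disc $\Dr{\varepsilon}^\ast$ on which $f$ omits two values, and only then applies the Normality theorem together with the maximum principle; you instead first extract a point of non-normality $z_0$ of the rescaled family (relying on the fact that normality is a local property, which is standard and no worse than what the paper itself does when it localizes the proof of Theorem \ref{izr:os.krit.nor}), and then play each sector $J(\zeta,\alpha)$ against that single point. Your Step two is correct, including the quantifier handling ($\zeta$ fixed once, the disc $D$ shrinking with $\alpha$, the exceptional pair allowed to depend on $\alpha$ --- a point the paper's own proof glosses over, since its finitely many sectors may carry different pairs $(a,b)$). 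The defect is in Step one, in the compactly divergent case.

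There you assert, parenthetically, that $f$ is zero-free for small $|z|$, and this is precisely what is needed to apply the maximum principle to $1/f$ on the annuli between consecutive circles $|z|=2^{-n_k}$. But compact divergence of the \emph{subsequence} $\{f_{n_k}\}$ only controls $f$ on the sets $2^{-n_k}K$ with $K\subset A$ compact; if the indices $n_k$ have gaps, these sets do not cover any punctured neighborhood of $0$, and nothing excludes zeros of $f$ accumulating at $0$ inside the unseen annuli $2^{-m}A$ with $m\notin\{n_k\}$. A single zero between two of your circles makes $1/f$ non-holomorphic there, the minimum-modulus step collapses, and $|f|\to\infty$ does not follow. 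The gap is repairable with tools you already have. Either: (i) if $f$ had zeros $z_j\to0$, pick integers $m_j$ with $w_j:=2^{m_j}z_j\in K_0:=\{1/\sqrt2\le|w|\le\sqrt2\}$; the sequence $\{f_{m_j}\}$ admits no compactly divergent subsequence (each member vanishes at a point of the fixed compact $K_0$), so the assumed normality yields a convergent subsequence and your Case 1 already gives the contradiction; hence zeros cannot accumulate at $0$ and your Case 2 becomes valid. Or: (ii) observe that if no subsequence of $\{f_n\}$ converges, then applying normality to every subsequence shows the \emph{entire} sequence is compactly divergent; then the sets $2^{-n}\{0.51\le|z|\le1.99\}$ over all large $n$ do cover a punctured neighborhood of $0$, so $|f|\to\infty$ at $0$ directly (a pole), with no recourse to $1/f$ at all. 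With either patch your proof is complete; as written, the zero-freeness claim is unjustified and the step fails.
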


\begin{proof}
Assume that for every $\zeta\in\partial\DD$ there is $\alpha_\zeta>0$ such that for some $a,b\in\CC$, $a\neq b$ equations $f(z)=a$ and $f(z)=b$ have finite solutions on $J(\zeta,\alpha_\zeta)$. Since the boundary of a disc is a compact set, a sequence $\{\zeta_n\}_{n=1}^N$ exists such that $\{J(\zeta_n,\alpha_{\zeta_n})\}$ is a finite open cover of $\DD^\ast$ with the previously mentioned property. Then there is $\varepsilon>0$ such that $f(\Dr{\varepsilon}^\ast)\subset\CC\setminus\{a,b\}$.

Let $\mathbb{A}(r_1,r_2):=\{z\in\CC\colon r_1<|z|<r_2\}$ be an annulus. Define the family $\mathscr{F}:=\{f_n(z)|_{\mathbb{A}(1/2,2)}\}$, where $f_n(z):=f(\varepsilon2^{-n}z)$. Since
\[
    \mathscr{F}\subset\hol{\mathbb{A}(1/2,2)}{\CC\setminus\{a,b\}},
\]
the Normality theorem guarantees that $\mathscr{F}$ is a normal family. Then a subsequence $\{f_{n1}\}\subset\{f_n\}$ exists such that $f_{n1}\rightarrow g\in\holC{\mathbb{A}(1/2,2)}$ or $\{f_n\}$ is compactly divergent. In the first case, the sequence $\{f_{n1}|_{\partial\DD}\}$ is uniformly bounded. In the second case, the sequence $\left\{\left(f_n|_{\partial\DD}\right)^{-1}\right\}$ is uniformly bounded. Assume that we are dealing with the first case. Then there exists $M>0$ such that $|f(z)|<M$ for every $|z|=\varepsilon2^{-n1}$. According to the maximum principle $|f(z)|<M$ in the neighborhood of $0$. This means that singularity is removable. This is in contradiction with the assumption of an essential singularity. In the second case, we get a removable singularity for $1/f$ in $0$, which is also a contradiction.
\end{proof}

\section{A glimpse of hyperbolic complex manifolds}

In this final section we briefly describe main properties of hyperbolic complex manifolds. We are especially interested on those properties which are in direct connection with Picard's theorems.

We begin with the notion of \emph{invariant pseudodistances}. These are pseudodistances which can be constructed on the category of complex manifolds and they become isometries for biholomorphic  mappings. In 1967, a Japanese mathematician \textbf{Shoshichi Kobayashi} (1932--2012) constructed one of those pseudodistances. For every $x,y\in M$ and every $f\in\hol{M}{N}$, where $M$ and $N$ are complex manifolds, \emph{Kobayashi pseudodistance} $d_M^K$ has the following properties
\begin{gather}
\label{ena:skrcitev}
d_N^K(f(x),f(y))\leq d_M^K(x,y), \\ \label{ena:skrcitev1}
d_{\DD}^K(x,y)=\rho(x,y).
\end{gather}
Thus $d_M^K$ is an invariant pseudodistance, which coincides with the Poincar\'{e} distance on a disc. Explicit construction is carried out by the so-called \emph{chain of holomorphic discs}
\[	\alpha : \left\{ \begin{array}{l}
                  p=p_0,p_1,\ldots,p_k=q \in M, \\
                  a_1,a_2,\ldots,a_k \in \DD, \\
                  f_1,f_2,\ldots,f_k \in \hol{\DD}{M}
                  \end{array}
         \right.
\]
between $p,q\in M$ where $f_n(0)=p_{n-1}$ and $f_n(a_n)= p_n$ for all $n\in\{1,\ldots,k\}$. Kobayashi pseudodistance is then defined as
\[
    \dk{M}{p}{q}:=\inf_\alpha\left\{\sum_{n=1}^{k}\rho(0,a_n)\right\}.
\]
Thus the construction of $d_M^K$ is in the spirit of Bloch principle. Generally speaking, $d_M^K$ is not a distance.

\begin{example}
We have $d_\CC^K\equiv0$. To see this, take a holomorphic mapping $f(z):=p+ \varepsilon^{-1}(q-p)z$ from $\DD$ into $\CC$, where $p,q\in\CC$ are arbitrary points and $\varepsilon>0$ is an arbitrary small number. Then $f(0)=p$ and $f(\varepsilon)=q$. From \eqref{ena:skrcitev} we get $d_\CC^K(p,q)\leq2\varepsilon$. Because mapping $\exp\colon\CC\rightarrow\pC$ is surjective, it also follows $d_{\pC}^K\equiv0$.
\end{example}

A complex manifold is \emph{hyperbolic} if the Kobayashi pseudodistance becomes a distance and \emph{complete hyperbolic} if $(M,d_M^K)$ is a complete metric space. Hyperbolic manifolds have several important properties, including the fact that the Kobayashi distance is inner, direct product of (complete) hyperbolic manifolds is (complete) hyperbolic and (complete) hyperbolicity is invariant with respect to unramified covering projections. The latter statement combined with Poincar\'{e}-Koebe uniformization theorem asserts that Riemann surface is hyperbolic if and only if its universal cover is a disc. This is in agreement with traditional meaning of hyperbolic Riemann surfaces.

We call pseudodistances, which satisfy the properties \eqref{ena:skrcitev} and \eqref{ena:skrcitev1}, \emph{contractible pseudodistances}. It can be shown that the Kobayashi pseudodistance is the largest among contractible pseudodistances. What is more, if a pseudodistance $d_M$ satisfies $d_M(f(x),f(y))\leq\rho(x,y)$ for all $x,y\in\DD$ and all $f\in\hol{\DD}{M}$, then $d_M\leq d_M^K$. Ahlfors' lemma implies that every planar domain (or more generally every Riemann surface), which carries a complete Hermitian metric of curvature not greater than $-1$, is complete hyperbolic. Therefore, the domain $\CC\setminus\{0,1\}$ is complete hyperbolic. From this it is easy to see that every domain $\Omega\subseteq\CC\setminus\{a,b\}, a\neq b$, is complete hyperbolic; observe that every Cauchy sequence in $\Omega$ is also Cauchy in $\CC\setminus\{a,b\}, a\neq b$. Of course in higher dimensions there exist hyperbolic domains which are not complete hyperbolic. Probably the simplest example is a punctured bidisc $\DD_2\setminus\{(0,0)\}$.

\begin{example}
Denote punctured bidisc by $X$. Because $X$ is bounded, it is hyperbolic. Define the following sequences $a_n:=(0,\alpha_n)$, $b_n:=(\alpha_n,0)$ and $c_{n,m}:=(\alpha_n,\alpha_m)$ where $\{\alpha_n\}\subset\DD$ is a sequence with property $\rho(0,\alpha_n) = 2^{-n}$. Introducing domains $X_1:=\DD\times\DD^\ast \subset X$ and $X_2:=\DD^\ast\times\DD\subset X$ yields
\begin{gather*}
\dk{X}{a_n}{b_n} \leq \dk{X_1}{a_n}{c_{n,n}} + \dk{X_2}{b_n}{c_{n,n}}, \\
\dk{X}{b_n}{a_{n+1}} \leq \dk{X_2}{b_n}{c_{n,n+1}} + \dk{X_1}{a_{n+1}}{c_{n,n+1}}.
\end{gather*}
Define $f_n\in\hol{\DD}{X_1}$ with $f_n(z):=(z,\alpha_n)$ and $g_n\in\hol{\DD}{X_2}$ with $g_n(z):=(\alpha_n,z)$. Then $\dk{X_1}{a_n}{c_{n,n}}\leq\rho(0,\alpha_n)$ and $\dk{X_1}{a_{n+1}}{c_{n,n+1}}\leq\rho(0,\alpha_n)$. Equivalently $\dk{X_2}{b_n}{c_{n,n}}\leq\rho(0,\alpha_n)$ and $\dk{X_2}{b_n}{c_{n,n+1}}\leq\rho(0,\alpha_n)$. Thus $\dk{X}{a_n}{b_n} \leq2^{1-n}$ and $\dk{X}{b_n}{a_{n+1}}\leq2^{1-n}$. Therefore $\{a_n\}_{n=1}^\infty$ is a Cauchy sequence which converges to $(0,0)\notin X$.
\end{example}

Hyperbolicity is closely related to the Little Picard theorem. If we assume $f\in\hol{\CC}{M}$, then we get $d_M^K(f(x),f(y))=0$ by \eqref{ena:skrcitev}. This shows that \emph{every holomorphic map from $\CC$ to a hyperbolic manifold is constant}. The converse of this statement is not true in general; however, it is true on compact complex manifolds in view of a fundamental theorem of Robert Brody from 1978. The image of holomorphic map from $\CC$ is said to be \emph{entire curve}. A complex manifold is said to be \emph{Brody hyperbolic} if all entire curves on it are constants. We prove that $\CC\setminus\{a,b\},a\neq b$ is also Brody hyperbolic.

Our definition of normal families can be adapted to holomorphic mappings between complex manifolds. A complex manifold $M$ is said to be \emph{taut} if  $\hol{N}{M}$ is a normal family for every complex manifold $N$. Taut manifolds are somewhere between complete hyperbolic and hyperbolic manifolds since it can be shown that completeness implies tautness and tautness implies hyperbolicity. The Hopf-Rinow theorem is crucial to prove this assertions. Therefore $\CC\setminus\{a,b\},a\neq b$ is taut domain which implies Normality theorem.

Is there any generalization of the Big Picard theorem in the sense of hyperbolicity? The answer is yes and it goes through \emph{hyperbolic imbeddings}. Let $X$ be a relatively compact domain in complex manifold $M$. If for every $x,y\in\overline{X}$ there exist neighborhoods $U\ni x$ and $V\ni y$ in $M$ such that $\dk{X}{U\cap X}{V\cap X}>0$, then $X$ is hyperbolically imbedded in $M$. Peter Kiernan coined this term in 1973 and proved that if $X$ is hyperbolically imbedded domain in $M$, then every map $f\in\hol{\DD^\ast}{X}$ has an extension to $\tilde{f}\in\hol{\DD}{M}$. Since $\cp{1}\setminus\{0,1,\infty\}$ is biholomorphic to $\CC\setminus\{0,1\}$ it follows from properties of the Hermitian metric, constructed in section \ref{sec:little.picard}, that $\cp{1}\setminus\{0,1,\infty\}$ is hyperbolically imbedded in $\cp{1}$.

We can consider a point in $\cp{1}$ as a hyperplane. Mark L. Green proved: \emph{complement of $2n+1$ hyperplanes in general position in $\cp{n}$ is complete hyperbolic and hyperbolically imbedded in $\cp{n}$}. Main ideas in the proof of this theorem are:
\begin{enumerate}
\item using Nevanlinna theory of meromorphic functions to study entire curves in complements of hyperplanes; here the most useful result is \emph{Borel's lemma: if $f_1+\cdots+f_n\equiv1$ for entire functions $f_1,\ldots,f_n$ without zeros, then at least one function must be a constant}. Observe that this implies Little Picard theorem.
\item extending Brody's theorem to obtaining a criteria for hyperbolically imbedded complements of complex hypersurfaces in compact complex manifolds;
\item connect hyperbolic imbeddings and complete hyperbolicity of such complements; it can be shown that \emph{if a complement of a complex hypersurface is hyperbolically imbedded in a complex manifold, then this complement is complete hyperbolic}.
\end{enumerate}

All results mentioned in this section can be found in Kobayashi's book \cite{Ko2}, still ultimate reference concerning hyperbolic complex spaces. The same author in \cite{Kob2005} offers an excellent introduction to the subject while Krantz's book \cite{Kra2004} has similar approach to the subject as here. The historical aspect of invariant pseudodistances and hyperbolicity are described in \cite{Roy88}. The greatness and beauty of invariant pseudodistances can be found in a comprehensive book \cite{JP}.

\subsection*{Acknowledgements}
I would like to thank my advisor prof.~Franc Forstneri\v{c} for valuable advices and comments about the improvement of the article.

\providecommand{\bysame}{\leavevmode\hbox to3em{\hrulefill}\thinspace}
\providecommand{\MR}{\relax\ifhmode\unskip\space\fi MR }
\providecommand{\MRhref}[2]{%
  \href{http://www.ams.org/mathscinet-getitem?mr=#1}{#2}
}
\providecommand{\href}[2]{#2}

\end{document}